\newif\ifdviwin
\titleformat{\section}{\large\bfseries\center}{\thesection}{1em}{\vspace{-.5cm}}
\titleformat{\subsection}[runin]{\bfseries}{\thesubsection}{1em}{}
\newif\ifdviwin
\def\H{\mathcal{H}}
\def\H{\mathfrak{h}}
\def\m2r{\mathbb{M}^2\times\mathbb{R}}
\def\h2r{\mathbb{H}^2\times\mathbb{R}}
\let\infty=\infty \let\0=\emptyset  
\let\hat=\widehat
\let\tilde=\widetilde
\let\varepsilon=\varepsilon
\def\cte.{\mathop{\rm cte.}\nolimits}
\def\E{\mathbb{E}}
\def\R{\mathbb{R}}
\def\M{\mathbb{M}}
\def\H{\mathcal{H}}
\def\D{\mathbb{D}}
\def\S{\mathbb{S}}
\def\m2r{\M^2\times\R}
\def\mkr{\M^2(\kappa)\times\R}
\def\Ekt{\E(\kappa,\tau)}
\def\rkt{\mathcal{R}(\kappa,\tau)}
\def\sb{\mathbb{S}^3_{b}(\kappa,\tau)}
\def\h2r{\mathbb{H}^2\times\R}
\def\s2r{\mathbb{S}^2\times\R}
\def\nil{\mathrm{Nil}_3}
\def\sl{\tilde{SL_2}(\R)}
\def\Hs{\mathfrak{h}\text{-}\mathrm{surface}}
\def\Hss{\mathfrak{h}\text{-}\mathrm{surfaces}}
\def\sig{\Sigma}
\def\r3{\mathbb{R}^3}
\def\arctanh{\mathrm{arctanh}}
\def\h{\mathfrak{h}}
\def\c{\mathfrak{C}^1}
\def\radi{4\h(y)^2+\kappa(1-y^2)+4\tau^2y^2}
\def\raiz{\sqrt{4\h(y)^2+\kappa(1-y^2)+4\tau^2y^2}}
\def\t1{\Theta_1}
\def\tm1{\Theta_{-1}}
\def\te{\Theta_\varepsilon}
 \newtheorem{defi}{Definition}[section]
 \newtheorem{teo}[defi]{Theorem}
 \newtheorem{pro}[defi]{Proposition}
 \newtheorem{obs}[defi]{Observation}
 \newenvironment{proof}{\rm \trivlist \item[\hskip \labelsep{\it
      Proof}:]}{\nopagebreak \hfill $\Box$ \endtrivlist}
 \newenvironment{proofclaim}{\rm \trivlist \item[\hskip \labelsep{\it
      Proof of the claim }:]}{\nopagebreak \hfill $\Box$ \endtrivlist}
\numberwithin{equation}{section}
\begin{document}
\thispagestyle{empty}

\begin{center}

\renewcommand{\thefootnote}{\,}
{\Large \bf Delaunay surfaces of prescribed mean curvature in $\nil$ and $\sl$
\footnote{\hspace{-.75cm}
\emph{Mathematics Subject Classification:} 53A10, 53C42, 34C05, 34C40\\
\emph{Keywords}: Prescribed mean curvature; Homogeneous 3-spaces; Delaunay-type classification result; Rotational tori.}}\\
\vspace{0.5cm} { Antonio Bueno}\\
\end{center}
\vspace{.5cm}
Departamento de Ciencias, Centro Universitario de la Defensa de San Javier, E-30729 Santiago de la Ribera, Spain. \\ 
\emph{E-mail address:} antonio.bueno@cud.upct.es

\begin{abstract}
We obtain a classification result for rotational surfaces in the Heisenberg space and the universal cover of the special linear group, whose mean curvature is given as a prescribed $C^1$ function depending on their angle function. We show that these surfaces behave like the Delaunay surfaces of constant mean curvature, under some assumptions on the prescribed function. In contrast with the constant mean curvature case, we exhibit the existence of rotational, embedded tori, providing counterexamples of the Alexandrov problem for this class of immersed surfaces.
\end{abstract}

\section{Introduction}
The study of surfaces of positive constant mean curvature (CMC surfaces) in the Euclidean space $\r3$ is a widely studied topic in the past centuries, whose impact transcends other fields in mathematics such as Analysis of PDE's, Complex Analysis, Geometric Measure Theory and Topology. Among the classical results we highlight the \emph{Delaunay theorem}, that classifies the complete, rotational CMC surfaces as round spheres, cylinders, unduloids and nodoids. In the literature, these surfaces are known as \emph{Delaunay surfaces}.

The theory of CMC surfaces has been extended to further ambient spaces, being of remarkable importance the $\Ekt$ spaces: the homogeneous, simply connected 3-dimensional manifolds whose isometry group have dimension greater than three and are not space forms. One of the major achievements was the resolution of the Hopf problem by Abresch-Rosenberg \cite{AbRo1,AbRo2}, proving that the only immersed CMC spheres are the rotational ones. This milestone attracted the attention of many researches, becoming an active and fruitful field of research; see e.g. \cite{Dan, DHM, FeMi} and references therein for an outline of the development of this theory.

In the past decades, many celebrated results of the theory of CMC surfaces have been generalized to the $\Ekt$ spaces, among which we remark the classification of complete, rotational CMC surfaces. It was achieved in the product spaces $\mkr$ by \cite{HsHs,PeRi}; in the Heisenberg space $\nil$ by \cite{Tom}; in the universal cover of the special linear group $\sl$ by \cite{Gor,Tor}; and in the Berger spheres $\S^3_b$ by \cite{Tor}. If the constant mean curvature $H_0$ satisfies $4H_0^2+\kappa>0$, the rotational CMC surfaces behave as the Delaunay surfaces in $\r3$. The value $\sqrt{-\kappa}/2$ for $\kappa\leq 0$ is known as the \emph{critical value} of the mean curvature; there exists a rotational sphere with CMC equal to $H_0$ if and only if $H_0>\sqrt{-\kappa}/2$.  

Taking as main motivation the Delaunay classification of rotational CMC surfaces in the $\Ekt$ spaces, in this paper we extend this result to the following class of immersed surfaces:
\begin{defi}\label{defihsup}
Let be $\h\in C^1([-1,1])$. An immersed, oriented surface $\sig$ in $\Ekt$ has \emph{prescribed mean curvature} $\h$ if its mean curvature function $H_\sig$ is given by
\begin{equation}\label{eqhsup}
H_\sig(p)=\h(\langle\eta_p,\xi\rangle),\hspace{.5cm} \forall p\in\sig,
\end{equation}
where $\eta$ is the unit normal of $\sig$, $\xi$ is the vertical Killing vector field in $\Ekt$ and $\langle\eta_p,\xi\rangle=:\nu_p$ is the angle function.
\end{defi}
For short, we will say that $\sig$ is an $\h$-surface. 

Some $\Hss$ have already appeared in the literature for certain prescribed functions: if $\h=H_0\in\R$ we have CMC surfaces and if $\h(y)=y$ we have translating solitons \cite{Bue1,Bue2,LiMa,Pip}. Also, for a general $\h$ the author has started the study of global properties of $\Hss$ in $\mkr$ \cite{Bue3,Bue4}.


The definition of this class of surfaces has its roots in the following \emph{Minkowsky-type prescribed curvature problem} in $\r3$:
\begin{defi}
Let be $\H\in C^1(\S^2)$. An immersed, oriented surface $\sig$ in $\r3$ is an $\H$-surface if its mean curvature function $H_\sig$ is given by
\begin{equation}\label{PMCEQR3}
H_\sig(p)=\H(N_p),\hspace{.5cm} \forall p\in\sig,
\end{equation}
where $N:\sig\rightarrow\S^n$ is the \emph{Gauss map} of $\sig$.
\end{defi}
Again, for $\H=H_0$ a constant we recover the surfaces with constant mean curvature $H_0$.

The study of surfaces defined by a prescribed relation between its principal curvatures and its Gauss map goes back, at least, to the famous Minkowski and Christoffel problems for ovaloids \cite{Min,Chr}. When we prescribe the mean curvature as in Equation \eqref{PMCEQR3}, Alexandrov and Pogorelov in the '50s \cite{Ale,Pog} and more recently Guan-Guan \cite{GuGu} and Gálvez-Mira \cite{GaMi1,GaMi2,GaMi3}, among others, focused on the existence and uniqueness of immersed $\H$-spheres. Recently, the author jointly with Gálvez and Mira started to develop the \emph{global theory of hypersurfaces with prescribed mean curvature} in \cite{BGM1,BGM2}, taking as starting point the well-studied theory of CMC surfaces in $\r3$. In \cite{BGM1} the authors focused on the study of rotational $\H$-surfaces, and in \cite{BGM2} global structure results for properly embedded surfaces, including height and curvature estimates, were exhibited.

We highlight the \emph{Delaunay-type classification result} achieved in \cite{BGM1}. Under necessary and sufficient assumptions on the prescribed function, the authors proved that the rotational $\H$-surfaces behave the same as the CMC Delaunay surfaces in $\r3$. The arbitrariness of the prescribed function made hopeless to find a first integral, even for concrete choices. Instead, the geometric properties of the rotational $\H$-surfaces were analyzed by means of a \emph{phase plane} analysis.  Inspired by these techniques, the author achieved a Delaunay-type classification result for $\h$-surfaces in $\mkr$ \cite{Bue3}.

As happened for $\H$-surfaces in $\r3$, some hypotheses on $\h$ are needed if we expect a Delaunay behavior for rotational $\Hss$. Throughout this paper, the prescribed function $\h$ will be always assumed to belong to the following set of functions:
\begin{equation}\label{espaciofunciones}
\mathfrak{C}^1:=\{\h\in C^1([-1,1]);\ \h(y)=\h(-y)>0\ \text{and}\ 4\h(y)^2+\kappa(1-y^2)>0,\ \forall y\in [-1,1]\}.
\end{equation}
Note that for the particular case that $\h$ is a constant $H_0>0$ and $\kappa<0$, the fact that $H_0\in\c$ reads as $H_0>\sqrt{-\kappa}/2$, which agrees with the critical value of the mean curvature.

Our main result in this paper is the following Delaunay-type classification result for $\Hss$.
\begin{teo}\label{teoremadelaunay}
Let be $\h\in\c$. Up to vertical translations, any complete, rotational $\h$-surface in $\nil$ or $\sl$ is one of the following:
\begin{itemize}
\item[1.] The vertical cylinder with radius $x_0=2\left(\sqrt{4\h(0)^2+\kappa}+2\h(0)\right)^{-1}$ and constant mean curvature $\h(0)$.
\item[2.] An embedded $\h$-sphere with strictly monotone angle function.
\item[3.] A 1-parameter family of properly embedded $\h$-unduloids.
\item[4.] A 1-parameter family of properly immersed $\h$-nodoids.
\end{itemize}
\end{teo}

One of the major issues is that the $\h$-nodoids of type $\mathit{4.}$ in Theorem \ref{teoremadelaunay} may end up closing. This configuration would lead to the existence of rotational embedded $\h$-tori, providing counterexamples to the \emph{Alexandrov problem} for $\Hss$. In general, the Alexandrov problem for a class of surfaces asks whether a compact, embedded surface is topologically a sphere. This problem was originally posed by Alexandrov for CMC surfaces in $\r3$ and proved by applying his celebrated argument of moving planes. 

For CMC surfaces in $\Ekt$, the Alexandrov problem is fully solved in $\mkr$ and $\S^3_b$. In the Heisenberg space $\nil$ and the space $\sl$, the Alexandrov problem is still an outstanding, major open problem. Regarding $\Hss$, the Alexandrov problem in $\mkr$ and $\S^3_b$ is also solved. The main difference here is that for $\kappa\leq0$ and $\tau\neq0$, we give sufficient conditions for the non-existence of rotational $\h$-tori and also ensure the existence of rotational $\h$-tori for certain prescribed functions $\h\in\c$.

%
\begin{teo}\label{teorematoros}
Let be $\h\in\c$	.
\begin{itemize}
\item[1.] If $\h$ is non-increasing in $[-1,0]$, then there do not exist rotational $\h$-tori in $\nil$ and $\sl$.
\item[2.] There are choices of $\h$ such that there exist rotational embedded $\h$-tori in $\nil$ and $\sl$
\end{itemize}
\end{teo}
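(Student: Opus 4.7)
My plan is to extract the phase plane governing rotational $\h$-surfaces in $\Ekt$ and to analyze, via the phase-plane method successfully employed in $\r3$ by Bueno--Gálvez--Mira, a period functional that detects whether the profile curve closes up. Let $\alpha$ denote the profile curve of a rotational $\h$-surface in a Killing submersion description of $\Ekt$, parametrized by arclength, and let $\nu=\langle\eta,\xi\rangle$ be its angle function. Under the rotational ansatz the prescribed mean curvature equation reduces to an autonomous planar system in coordinates $(x,\nu)$, with $x$ the distance to the rotation axis. The $\h$-nodoids of Theorem \ref{teoremadelaunay}(4) correspond to closed orbits of this system encircling the equilibrium $(x_0,0)$, namely the cylinder of Theorem \ref{teoremadelaunay}(1); the generating curve of a rotational $\h$-torus in $\nil$ or $\sl$ must in turn be a closed curve in the base $\mk$, a condition expressible as the vanishing of a vertical (equivalently, angular) period $\Delta$ evaluated along one cycle of such a phase-plane orbit.

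For part 1, I would write $\Delta$ as a line integral over the closed orbit of an explicit integrand $F(x,\nu;\h,\kappa,\tau)$. The torsion term $\tau\neq 0$ contributes a twist piece whose sign is fixed along a nodoid orbit; this is precisely what obstructs the existence of rotational CMC tori in $\nil$ and $\sl$. Using the evenness $\h(y)=\h(-y)$ I would symmetrize by pairing the portions of the orbit with $\nu\geq 0$ and $\nu\leq 0$. The hypothesis that $\h$ is non-increasing on $[-1,0]$ (equivalently, non-decreasing on $[0,1]$ by evenness) then yields a pointwise comparison of the paired integrands: the $\h$-dependent contribution is sign-compatible with the $\tau$-twist, so the full integrand has one sign along the whole orbit, and hence $\Delta\neq 0$ on every nodoid orbit.

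For part 2 I rely on a continuity argument inside the path-connected space $\c$. The period $\Delta$ depends continuously on $\h\in\c$ along a suitable one-parameter family of nodoid orbits, so it suffices to exhibit two functions $\h_-,\h_+\in\c$ for which $\Delta(\h_-)<0$ and $\Delta(\h_+)>0$. Natural candidates are perturbations of a large constant CMC value $H_0>0$ that bias the twist in opposite directions by concentrating the variation of $\h$ near $\nu=0$ or near $\nu=\pm 1$. A path inside $\c$ joining $\h_-$ and $\h_+$, combined with the intermediate value theorem, produces $\h_\ast\in\c$ whose profile curve closes up, yielding a rotational $\h_\ast$-torus; embeddedness follows from the nodoid-type behavior of the $(x,\nu)$-orbit, whose $x$-coordinate is monotone on each half-period.

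The main obstacle is the sign analysis in part 1: the coupling of $\h(\nu)$ with the curvature term $\kappa(1-\nu^2)$ and the twist $4\tau^2\nu^2$ obstructs a naive symmetrization, and one must combine evenness and monotonicity of $\h$ with the definite sign of the $\tau$-contribution in a single pointwise comparison across the orbit. In part 2 the delicate technical point is ensuring that the chosen perturbations remain in $\c$, i.e.\ that positivity of $\h$ and of $4\h(y)^2+\kappa(1-y^2)$ is preserved along the interpolation, while still producing the opposite required signs of $\Delta$.
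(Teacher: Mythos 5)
Your overall framework is the paper's: reduce the closing condition for a rotational $\h$-torus to the vanishing of the net vertical displacement of the profile curve over one cycle of a nodoid orbit in the $(x,\nu)$ phase plane, prove that this quantity has a definite sign under the monotonicity hypothesis by a pointwise pairing of integrands, and obtain existence by an intermediate-value argument in $\h$. However, the key step of Part 1 as you describe it does not work. Pairing the portions of the orbit with $\nu\geq 0$ against those with $\nu\leq 0$ is vacuous: by the evenness of $\h$ the phase plane is symmetric under $(x,y)\mapsto(x,-y)$, so those two halves of the nodoid orbit contribute \emph{identical} vertical displacements and the comparison detects nothing. The paper's pairing takes place entirely inside one half of the loop, where $\nu\in[-1,0]$: writing the tangent angle $\theta$ as the parameter (with $\theta(0)=\pi/2$, $\theta(s_0)=\pi$, $\theta(s_1)=3\pi/2$), one pairs a point $s$ on the rising arc ($z$ increasing, $\theta\in(\pi/2,\pi)$) with the point $\overline{s}$ on the falling arc having $\sin\theta(\overline{s})=-\sin\theta(s)$. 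Along this half-loop $x$ is strictly decreasing, so $x(s)>x(\overline{s})$; since $\kappa\leq 0$ the function $f(x)=(4-\kappa x^2)/x$ is decreasing, and $\nu=\cos\theta/\sqrt{1+\tau^2x^2}$ gives $0\geq\nu(s)>\nu(\overline{s})$ (this is exactly where $\tau>0$ enters). Non-increasing $\h$ on $[-1,0]$ then yields $\h(\nu(s))\leq\h(\nu(\overline{s}))$ and hence $-\tfrac{dz}{d\theta}(\overline{s})<\tfrac{dz}{d\theta}(s)$, so the fall is strictly smaller than the rise and the curve cannot close. Note also that the integrand $dz/d\theta$ does \emph{not} have one sign along the orbit (it changes sign at $\theta=\pi$); only the paired sum does. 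Your attribution of the obstruction to a ``$\tau$-twist piece of fixed sign'' is likewise off: the same inequality works for $\tau=0$, and $\tau$ intervenes only through the comparison of the angle functions at paired points, not as a separate signed term. Finally, the closing condition concerns the vertical coordinate $z$ in the orbit space, not closedness of a curve in the base $\mk$.

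For Part 2 your IVT strategy is the right one, but the proposal is missing the mechanism that produces the opposite sign. The paper fixes a nodoid with waist radius $x_1$, sets $\nu_0=-1/\sqrt{1+\tau^2x_1^2}$ (the value of $\nu$ at the top of the rising arc), and takes $\h_\lambda$ equal to a large constant $\lambda$ on $[\nu_0,-\nu_0]$ and equal to $H_0$ where $|\,\nu\,|>|\nu_0|+\delta$. The rising arc only sees $\nu\in[\nu_0,0]$, so its contribution $I_2(\h_\lambda)$ tends to $0$ as $\lambda\to\infty$, while the falling arc spends a definite $\theta$-interval in the region $\nu<\nu_0-\delta$ where $\h_\lambda=H_0$, so $I_1(\h_\lambda)$ stays bounded below by a constant independent of $\lambda$. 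This is what reverses the inequality and makes the intermediate-value argument bite. You should also make explicit that the period functional is evaluated on a nodoid selected continuously in $\h$ (e.g.\ by prescribing its outer radius), so that continuous dependence of solutions of the ODE system justifies the IVT step; and, as you note, that the interpolating family stays in $\c$, which here is automatic since $\h_\lambda\geq H_0>0$ and increasing $\h$ preserves $4\h^2+\kappa(1-y^2)>0$ when $\kappa\leq 0$.
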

Note that a constant $\h=H_0\in\c$ lies in the hypothesis of Item $\mathit{1.}$, recovering the non-existence of rotational CMC tori in $\nil$ and $\sl$. 
 
These rotational and embedded $\h$-tori provide counterexamples to Alexandrov problem for $\Hss$ in $\nil$ and $\sl$, that is the rotational embedded $\h$-spheres given by Item $\mathit{1.}$ of Theorem \ref{teoremadelaunay} are not unique in Alexandrov sense.

The rest of the introduction is devoted to further detail the organization of the paper.

In Section \ref{secpropbasicas} we define the $\Ekt$ spaces as the family of 3-dimensional homogeneous manifolds with a 4-dimensional isometry group, and we introduce a canonical coordinate model. We also define the class of immersed $\Hss$ in $\Ekt$ and deduce some properties, making special emphasis on the ambient isometries that are also isometries for $\Hss$.

In Section \ref{rotational} we focus in the analysis of rotational $\Hss$. In the same fashion as in \cite{BGM1}, the approach will be done by means of a phase plane study of the solutions of the non-linear, autonomous system of ODE's that the coordinates of the profile curve satisfy. In Section \ref{basicformulas} we deduce the formulas that the profile curve satisfies, and relate the geometry of this curve with the prescribed mean curvature. In Section \ref{thephaseplane} we define the phase plane and exhibit some of its first elements. Sections \ref{behaviorofgamma} and \ref{structurephaseplane} are devoted to study in detail the local and global properties of this phase plane.


Finally, Section \ref{clasificacionrotacionales} is fully devoted to the proof of Theorem \ref{teoremadelaunay}, while in Section \ref{discusiontoros} we prove Theorem \ref{teorematoros}. Specifically, in Section \ref{nonexistencetori} we give sufficient conditions on the prescribed function $\h$ for the non-existence of rotational $\h$-tori, and in Section \ref{existencetori} we exhibit the existence of $\h$-tori.
 
\textbf{Acknowledgments:} The author is thankful to José A. Gálvez, José M. Manzano and Francisco Torralbo for helpful comments and observations.

\section{Immersed $\h$-surfaces in the $\Ekt$ spaces}\label{secpropbasicas}

\subsection{The $\Ekt$ spaces.}\label{TheEKTspaces} Consider a homogeneous, simply connected, 3-dimensional manifold whose isometry group has dimension greater than 3 and that is not a space form. Then, its isometry group has dimension 4 and is one of the $\Ekt$ spaces for some $\kappa,\tau\in\R$ such that $\kappa\neq4\tau^2$, see \cite{Dan}. A change in the orientation in the space changes $\tau$ into $-\tau$, hence we will suppose that $\tau>0$ without losing generality.

The $\Ekt$ spaces admit a Riemannian submersion $\pi:\Ekt\rightarrow\M^2(\kappa)$ onto the complete, simply connected surface of constant curvature $\kappa$. This fibration has a unitary Killing vector field that will be denoted by $\xi$, whose integral curves are precisely the fibers of the submersion; recall that the fibers are the sets $\pi^{-1}(q),\ q\in\M^2(\kappa)$. The group of isometries generated by the Killing vector field $\xi$ are the \emph{vertical translations}. 

If $\tau=0$ we recover the product spaces $\M^2(\kappa)\times\R$ and the submersion $\pi$ is isomorphic to the projection $\M^2(\kappa)\times\R\rightarrow\M^2(\kappa)$. When $\tau>0$ we get the Heisenberg space $\nil$ for $\kappa=0$; the Berger spheres $\sb$ for $\kappa>0$; and the universal cover of the special linear group, the space $\sl$, for $\kappa<0$. Note that in the Berger spheres, the projection $\pi$ is isomorphic to the Hopf fibration. 

A key feature is that for every $p\in\Ekt$ there exists a continuous 1-parameter family of orientation preserving isometries leaving pointwise fixed the fiber $\pi^{-1}(\pi(p))$; these isometries will be called \emph{rotations around the axis $\pi^{-1}(\pi(p))$}.

Next we describe a coordinate model for the $\Ekt$ spaces; when $\kappa\leq 0$ the model is global, and when $\kappa>0$ the model is homeomorphic to the universal cover of the space minus one fiber. The notation used is inspired by Section 2.1. in \cite{GaMi3}. We consider $\rkt$ to be the space $\r3$ if $\kappa\geq 0$, or the disk $\D(2/\sqrt{-\kappa})$ if $\kappa<0$, endowed with coordinates $(x,y,z)$, and the metric
\begin{equation}\label{metricaE}
\langle\cdot,\cdot\rangle=\lambda^2(dx^2+dy^2)+\left(\lambda\tau(ydx-xdy)+dz\right)^2,\hspace{.5cm} \lambda=\frac{4}{4+\kappa(x^2+y^2)}.
\end{equation}
Then, $\rkt$ is isometric to the corresponding $\Ekt$ space, and the Riemannian submersion is isomorphic to the projection onto the first two coordinates. The vector fields
$$
E_1=\frac{1}{\lambda}\partial_x-\tau y\partial_z,\hspace{.5cm} E_2=\frac{1}{\lambda}\partial_y+\tau x\partial_z,\hspace{.5cm} E_3=\xi=\partial_z,
$$
are an orthonormal frame. The $E_3$-axis is defined to be the fiber $\pi^{-1}(\pi((0,0,0)))$. In this coordinate model we have that the usual rotations 
$$
(x,y,z)\longmapsto (x\cos\theta+y\sin\theta,-x\sin\theta+y\cos\theta,z),\hspace{.5cm} \theta\in\R,
$$
are the rotations around the $E_3$-axis.

Given an immersed surface $\sig$ in $\Ekt$ and $\eta:\sig\rightarrow T\Ekt$ a unit normal along $\sig$, the function defined by
\begin{equation}\label{anguloekt}
\nu:\sig\rightarrow\R,\hspace{.5cm} \nu_p=\langle\eta_p,E_3\rangle
\end{equation}
is the \emph{angle function} of $\sig$. 

\subsection{Isometries and $\h$-surfaces.}\label{hsurfacesekt} As stated in Definition \ref{defihsup} in the Introduction, given $\h\in C^1([-1,1])$ an $\Hs$ in an $\Ekt$ space is an immersed surface $\sig$ whose mean curvature satisfies $H_\sig(p)=\h(\nu_p)$, for every $p\in\sig$.

Next we describe the isometries in the $\Ekt$ spaces that send $\Hss$ into $\Hss$. As a matter of fact, that translations and rotations around the fibers leave invariant the angle function at any $\Ekt$ space, hence in virtue of Equation \eqref{eqhsup} send $\Hss$ into $\Hss$.

The only isometries in the $\Ekt$ spaces that change the value of $\nu$ are reflections with respect to horizontal planes if $\tau=0$ and rotations of angle $\pi$ around horizontal geodesics if $\tau\neq0$; any of these isometries change the value $\nu$ into $-\nu$. Hence, if $\h$ is an even function, $\sig$ is an $\Hs$ and $\Psi$ is any of those isometries, then $\Psi(\sig)$ also satisfies Equation \eqref{defihsup} and so $\Psi$ send $\Hss$ into $\Hss$.

\textbf{2.3 Existence of radial solutions.} In this section we show the existence of radial solutions for graphical $\Hss$ defined over a disk with small enough radius. The following proposition is a consequence of a more general existence result for radial solutions of a fully non-linear PDE in the $\Ekt$ spaces, see Lemma 4.1 in \cite{GaMi3}.
\begin{pro}\label{problemadirichlet}
Let be $\h\in C^1([-1,1])$. There exists $\delta>0$ and a function $f:[0,\delta]\rightarrow\R$ such that the radial graph over the distance disk $D(\textbf{o},\delta)$ in $\M^2(\kappa)$,
$$
\sig_f:=\{(x\cos\theta,x\sin\theta,f(x));\ x\in[0,\delta],\ \theta\in [0,2\pi]\},\hspace{.5cm} f'(0)=0,
$$
with upwards orientation is an $\h$-surface in $\rkt$. Moreover, $\sig_f$ is unique among graphical $\h$-surfaces over $D(\textbf{o},\delta)$ having constant boundary data. The same holds for downwards orientation.
\end{pro}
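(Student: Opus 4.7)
The plan is to reduce the statement to Lemma 4.1 of \cite{GaMi3}, which provides the general existence theory for radial solutions of the prescribed curvature PDE in the $\Ekt$ spaces. First, I would derive the graph equation satisfied by a function $f$ defined on a neighborhood of $\mathbf{o}$ in $\M^2(\kappa)$ whose graph $\sig_f \subset \rkt$ is an $\h$-surface. Working in the coordinates of \eqref{metricaE}, the condition $H_{\sig_f} = \h(\nu)$ translates into a quasilinear elliptic PDE whose divergence-form leading part is the standard mean curvature operator for graphs in $\rkt$, and whose lower-order terms involve $\tau$, $\kappa$, and the angle function $\nu$ expressed through $\nabla f$.

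Next, I would impose the radial ansatz $f(x,y)=u(r)$, where $r$ denotes the Riemannian distance in $\M^2(\kappa)$ from $\mathbf{o}$. This reduces the quasilinear PDE to a second order autonomous ODE for $u(r)$ that is singular at $r=0$ due to the polar term. Prescribing the initial data $u(0)=c$ (free, to be fixed later by a vertical translation) and $u'(0)=0$ removes the singularity, and Lemma 4.1 of \cite{GaMi3} yields a unique $C^2$ solution $u$ on a maximal interval $[0,\delta]$ for some $\delta>0$ depending on $\h$. The condition $f'(0)=0$ ensures that $\sig_f$ extends smoothly across the $E_3$-axis, so $\sig_f$ is an immersed smooth surface which, by construction with the upwards orientation, is an $\h$-surface. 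The case of downwards orientation follows at once by applying the same argument to the prescribed function $y\mapsto -\h(-y)$, since reversing the normal changes $\nu$ into $-\nu$ and $H_\sig$ into $-H_\sig$.

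For uniqueness, let $g$ be another solution of the same quasilinear graph equation on $D(\mathbf{o},\delta)$ with the same constant boundary value as $f$. Since the graph $\h$-equation is a quasilinear elliptic PDE in divergence form and $f,g$ agree on $\partial D(\mathbf{o},\delta)$, the tangency principle (comparison principle) applied to the difference $f-g$ forces $f=g$ throughout $D(\mathbf{o},\delta)$. Together with the radial symmetry of the boundary data, this also forces any such $g$ to coincide with the radial solution $f$ given by the ODE.

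The main obstacle is verifying that the reduction to the radial ODE falls exactly within the framework of Lemma 4.1 in \cite{GaMi3}: one must check that the structural hypotheses on the PDE (ellipticity, the behavior of the coefficients near $r=0$, and the regularity $\h\in C^1$) are met in our prescribed curvature setting, so that the desingularization at $r=0$ produces a genuine smooth radial graph rather than merely a weak solution. Once this is in place, both existence and local uniqueness are immediate consequences of the cited result together with the standard maximum principle for quasilinear elliptic operators.
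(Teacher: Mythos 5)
Your proposal follows essentially the same route as the paper: the paper proves this proposition simply by invoking the general existence result for radial solutions of a fully nonlinear PDE in the $\Ekt$ spaces, namely Lemma 4.1 of \cite{GaMi3}, and your reduction to that lemma (radial ansatz, desingularization at $r=0$ via $u'(0)=0$, tangency principle for uniqueness, and the substitution $y\mapsto -\h(-y)$ for the downwards orientation) is a correct fleshing-out of that citation. No gap.
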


\section{Rotational $\h$-surfaces in homogeneous 3-spaces}\label{rotational}

\subsection{Basic formulas.}\label{basicformulas} We begin by locally parametrizing a rotational $\Hs$ in the coordinate model $\rkt$. Let $\alpha(u)=(x(u),0,z(u))$ be a curve in the $xz$-plane\footnote{The $xz$-plane is just the subset $\{y=0\}$ in $\rkt$. Only when $\tau=0$ it is a totally geodesic surface in $\Ekt$ isometric to $\R^2$.}. The map
$$
\psi(u,\theta)=\left(x(u)\cos\theta,x(u)\sin\theta,z(u)\right),\hspace{.5cm}
$$
defines an immersed surface $\sig$ as the image of $\alpha(u)$ under the rotations of $\rkt$ that leave the $E_3$-axis pointwise fixed. Note that the projection $\pi(\psi(u,\theta))$ lies in the section $\{z=0\}\subset\rkt$. For $\kappa\geq 0$ this reads as $x(u)\in(0,\infty)$ (since the section $\{z=0\}$ is $\R^2$), but for $\kappa<0$ this implies that $x(u)\in(0,2/\sqrt{-\kappa})$.

The angle function of $\sig$ in this model is
$$
\nu=\frac{4x'}{\sqrt{16(1+\tau^2x^2)x'^2+z'^2(4+\kappa x^2)^2}}.
$$
and the mean curvature $H_\sig$ has the following expression

\begin{equation}\label{eqmedia}
2H_\sig=\frac{\left(4+\kappa x^2\right)^2 \left(z'^3 \left(16-\kappa^2 x^4\right)-16 z' \left(\tau^2 x^3 x''+x x''-x'^2\right)+16 z''x x'\left(1+\tau^2 x^2\right)\right)}{4 x \left(z'^2 \left(4+\kappa x^2\right)^2+16 x'^2 \left(1+\tau^2 x^2\right)\right)^{3/2}}.
\end{equation}
Now we consider the metric 
\begin{equation}\label{metricappa}
d\sigma^2=(1+\tau^2x^2)dx^2+\frac{(4+\kappa x^2)^2}{16} dz^2
\end{equation}
in the $xz$-plane and the arc-length parameter $s$ of $\alpha$ with respect to this metric. A straightforward computation shows that, with this arc-length parameter, the angle function is $\nu=x'$ and the mean curvature is
$$
2\varepsilon H_\sig=\frac{x \left(-x'' \left(4+\kappa x^2\right) \left(1+\tau^2 x^2\right)+x x'^2 \left(\kappa-8 \tau^2\right)-\kappa x\right)-4 x'^2+4}{4 x \sqrt{1-x'^2 \left(1+\tau^2 x^2\right)}},\hspace{.5cm} \varepsilon:=\mathrm{sign}(z').
$$
From now on we suppose that $\sig$ is an $\h$-surface for some $\h\in C^1([-1,1])$, that is $H_\sig(p)=\h(\nu_p),\ \forall p\in\sig$. Solving this equation for $x''$ yields
$$
x''=\frac{4-\kappa x^2-x'^2(4-x^2(\kappa-8\tau^2))-8\varepsilon x\h(x')\sqrt{1-(1+\tau^2 x^2)x'^2}}{x \left(4+\kappa x^2\right) \left(1+\tau^2 x^2\right)}.
$$

After the change of variable $x'=y$, this equation transforms into the first order, autonomous system
\begin{equation}\label{1ordersys}
\left(\begin{array}{c}
x\\
y
\end{array}\right)'=\left(\begin{array}{c}
y\\
\displaystyle{\frac{4-\kappa x^2-y^2(4-x^2(\kappa-8\tau^2))-8\varepsilon x\h(y)\sqrt{1-(1+\tau^2 x^2)y^2}}{x \left(4+\kappa x^2\right) \left(1+\tau^2 x^2\right)}}
\end{array}\right).
\end{equation}

From the arc-length condition $(1+\tau^2x^2)x'^2+(4+\kappa x^2)^2/16z'^2=1$ we have that the angle function $y=x'$ satisfies
$$
\frac{-1}{\sqrt{1+\tau^2x^2}}\leq y\leq\frac{1}{\sqrt{1+\tau^2x^2}},
$$
with equality if and only if the height function $z$ of $\alpha$ has a local extremum. This implies that system \eqref{1ordersys} is only defined for points $(x_0,y_0)$ such that $x_0>0$ and $y_0^2\leq 1/(1+\tau^2x_0^2)$. For instance, note that for the case $\tau> 0$, the angle function satisfies $y=\pm1$ if and only if $x=0$, which only happens at the axis of rotation.

\subsection{The phase plane.}\label{thephaseplane} Hereinafter we suppose that $\tau>0$ and $\kappa\leq0$, i.e. we focus on the spaces $\nil$ and $\sl$.

The phase plane of Equation \eqref{1ordersys} is defined as the set
$$
\Theta_\varepsilon:=\left\lbrace(x,y);\ x>0\ \mathrm{and}\ y^2<\frac{1}{1+\tau^2x^2}\right\rbrace,
$$
with coordinates $(x,y)$ denoting the distance to the axis of rotation and the angle function. Note that in virtue of the models introduced in Section \ref{TheEKTspaces}, if $\kappa<0$ the $x$-coordinate is defined for $0<x<2/\sqrt{-\kappa}$, while if $\kappa=0$ the $x$-coordinate is defined for every $x>0$. 

The boundary of $\Theta_\varepsilon$ consists of the segment $\{0\}\times[-1,1]$ and the vertical graphs $y=\pm1/\sqrt{1+\tau^2x^2}$. We will denote by $\Omega^+$ (resp. $\Omega^-$) to the component $y=1/\sqrt{1+\tau^2x^2}$ (resp. to the component $y=-1/\sqrt{1+\tau^2x^2}$), and by $\Omega:=\Omega^+\cup\Omega^-$.


The orbits are the solutions of system \eqref{1ordersys} and will be denoted by $\gamma(s)=(x(s),y(s))$. The existence and uniqueness of the Cauchy problem associated to Equation \eqref{1ordersys} has as consequence two important facts: $i)$ two different orbits cannot intersect in $\Theta_\varepsilon$, and $ii)$ the orbits are a foliation of $\Theta_\varepsilon$ by regular $C^1$ curves.
 
Although we will remind it in the statement of the main results, \textbf{$\h$ will be always supposed to lie in the space $\c$}, see Equation \eqref{espaciofunciones} for a definition of the space $\c$.  

For example, the fact that $\h$ is even has the following consequence on the phase plane $\te$: if $\gamma(s)=(x(s),y(s))$ is a solution to Equation \eqref{1ordersys}, so it is $\tilde{\gamma}(s)=(x(-s),-y(-s))$. This condition is related with the fact that rotations of angle $\pi$ around horizontal geodesics if $\tau>0$, are isometries for $\Hss$.

Since $\h\in\c$, a trivial solution to system \eqref{1ordersys} in $\Theta_1$ is the one given by the constant orbit 
\begin{equation}\label{ecuequilibrio}
x(s)=\frac{2}{\sqrt{4\h(0)^2+\kappa}+2\h(0)},\hspace{.5cm} y(s)=0.
\end{equation}
This point is the \emph{equilibrium} of \eqref{1ordersys} and will be denoted by $e_0$. This equilibrium generates a vertical, circular cylinder with constant mean curvature equal to $\h(0)$. In the next section we will see that no equilibria exist in $\tm1$.

From Equation \eqref{1ordersys} we see that the points in $\Theta_\varepsilon$ with $y'(s)=0$ are the ones lying in the intersection of $\Theta_\varepsilon$ with the (possibly disconnected) horizontal graph:
\begin{equation}\label{ecugamma}
x=\Gamma_\varepsilon(y):=2\sqrt{\frac{1-y^2}{\kappa(1-y^2)+8(\h(y)^2+\tau^2 y^2)+4\varepsilon\h(y)\sqrt{4\h(y)^2+\kappa(1-y^2)+4\tau^2 y^2}}}.
\end{equation}
We define $\Gamma_\varepsilon:=\{x=\Gamma_\varepsilon(y)\}\cap\Theta_\varepsilon$. Note that the points lying in $\Gamma_\varepsilon$ correspond to points whose angle function has vanishing derivative, and that $\Gamma_1\cap\{y=0\}=e_0$ and $\Gamma_{-1}\cap\{y=0\}=e_{-1}$ for $\kappa>0$. Again, since $\h$ is even we get that $\Gamma_\varepsilon$ is symmetric with respect to the axis $y=0$.

\begin{obs}
We must clarify the difference between existing an equilibrium point in $\tm1$ and the point given by $(\Gamma_{-1}(0),0)$. Even though the latter can exist (for the case that $\Gamma_{-1}(0)$ is well defined), the point $(\Gamma_{-1}(0),0)$ may not lie in $\tm1$. For instance, if $\kappa<0$ we will see that $\Gamma_{-1}(0)>2/\sqrt{-\kappa}$, hence it lies \emph{outside} $\tm1$. 
\end{obs}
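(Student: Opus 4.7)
The plan is to evaluate $\Gamma_\varepsilon$ from \eqref{ecugamma} at $y=0$ with $\varepsilon=-1$ and expose a perfect-square factorization in the denominator. Substituting $y=0$ gives
\[
\Gamma_{-1}(0)^2 = \frac{4}{\kappa + 8\h(0)^2 - 4\h(0)\sqrt{4\h(0)^2+\kappa}},
\]
and the crucial observation is that the denominator equals $\bigl(\sqrt{4\h(0)^2+\kappa}-2\h(0)\bigr)^2$, as one checks by direct expansion. Taking square roots yields the compact expression
\[
\Gamma_{-1}(0) = \frac{2}{\bigl|\sqrt{4\h(0)^2+\kappa}-2\h(0)\bigr|}.
\]

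I would then split into cases on the sign of $\kappa$, noting that $\h\in\c$ guarantees $4\h(0)^2+\kappa>0$ throughout. For $\kappa>0$ the quantity $\sqrt{4\h(0)^2+\kappa}-2\h(0)$ is positive, the absolute value opens, and the expression reproduces exactly the formula \eqref{equilibrios2r} for $e_{-1}$; this is the consistency check showing that the equilibrium does lie in $\tm1$ in the $\kappa>0$ setting. For $\kappa<0$ the inequality is reversed, $\sqrt{4\h(0)^2+\kappa}<2\h(0)$, hence
\[
\Gamma_{-1}(0) = \frac{2}{2\h(0)-\sqrt{4\h(0)^2+\kappa}}.
\]

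The comparison $\Gamma_{-1}(0)>2/\sqrt{-\kappa}$ then reduces, after cross-multiplying and rearranging, to $\sqrt{-\kappa}+\sqrt{4\h(0)^2+\kappa}>2\h(0)$. Both sides are positive, so squaring gives the equivalent inequality $2\sqrt{-\kappa}\sqrt{4\h(0)^2+\kappa}>0$, which is immediate from $\kappa<0$ and $\h\in\c$. Since the $x$-coordinate in $\rkt$ is confined to $(0,2/\sqrt{-\kappa})$ when $\kappa<0$, this shows that $(\Gamma_{-1}(0),0)\notin\tm1$. Finally, for $\kappa=0$ the same denominator becomes $2\h(0)-2\h(0)=0$, so $\Gamma_{-1}(0)$ is undefined, in accordance with the limiting behavior as $\kappa\to 0^-$. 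The only step requiring any insight is spotting the perfect-square factorization; once it is noted, the argument is a routine case split together with one elementary inequality, so I do not anticipate any real obstacle.
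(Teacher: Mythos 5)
Your proposal is correct and follows essentially the same route as the paper's Claim 3: the perfect-square factorization you highlight is exactly the "simplification" the paper uses to arrive at $\Gamma_{-1}(0)=2\bigl(2\h(0)-\sqrt{4\h(0)^2+\kappa}\bigr)^{-1}$ for $\kappa<0$, and your final inequality is an equivalent rearrangement of the paper's $0>\sqrt{4\h(0)^2+\kappa}\bigl(\sqrt{4\h(0)^2+\kappa}-2\h(0)\bigr)$. Your unified absolute-value formula tidily covers the $\kappa>0$ consistency check with $e_{-1}$ and the degeneracy at $\kappa=0$ as well, so nothing is missing.
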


\subsection{The structure of $\Gamma_\varepsilon$.}\label{behaviorofgamma} As revealed in the study made in \cite{BGM1}, the curve $\Gamma_\varepsilon$ deeply governs the behavior of the orbits in $\Theta_\varepsilon$. We study next the properties of the curve $\Gamma_\varepsilon$ in the different phase planes of the spaces $\nil$ and $\sl$.

First, recall that the range of the $x$-coordinate in $\te$ depends on the value of $\kappa$. For the case that $\kappa=0$, the phase plane $\Theta_\varepsilon$ is defined for every $x>0$. For $\kappa<0$, $\Theta_\varepsilon$ is defined for $0<x<2/\sqrt{-\kappa}$. Because of the restrictions on $x$ for $\kappa<0$, the curve $\Gamma_\varepsilon$ can leave $\te$, or even not exist. Nonetheless, we will see that $\Gamma_1$ is always contained in $\t1$.

\textbf{Claim 1.} The curve $\Gamma_1$ for $\kappa<0$ lies entirely in $\Theta_1$, i.e. $\Gamma_1(y)<2/\sqrt{-\kappa},\ \forall y\in[-1,1]$.

\begin{proofclaim}
From \eqref{ecugamma} we see that $\Gamma_1(y)<2/\sqrt{-\kappa}$ if and only if
$$
-\kappa(1-y^2)<\kappa(1-y^2)+8(\h(y)^2+\tau^2y^2)+4\h(y)\sqrt{\radi}.
$$
Simplifying we arrive to
$$
0<4\h(y)^2+\kappa(1-y^2)+4\tau^2y^2+2\h(y)\raiz.
$$
Since $\h\in\c$ in particular $4\h(y)^2+\kappa(1-y^2)>0$ holds for every $y\in[-1,1]$, hence the above inequality yields.
\end{proofclaim}
In any case, the curve $\Gamma_1$ is a connected, compact arc in $\Theta_1$ and having the points $(0,\pm1)$ as endpoints. 

Now, we focus on the curve $\Gamma_{-1}$ in the different spaces. The proof of the following claim holds immediately by just substituting in Equation \eqref{ecugamma} the value $\kappa=0$.

\textbf{Claim 2.} The curve $\Gamma_{-1}$ for $\kappa=0$ is a disconnected bi-graph over the axis $y=0$, having the points $(0,\pm1)$ as endpoints and an asymptote at $y=0$.

As we pointed out, for $\kappa<0$ the curve $\Gamma_{-1}$ may leave the phase plane $\tm1$. The following claim reveals that the point $(\Gamma_{-1}(0),0)$ is well defined, and lies outside $\tm1$.

\textbf{Claim 3.} The value $\Gamma_{-1}(0)$ for $\kappa=-1$ is well defined and $\Gamma_{-1}(0)>2/\sqrt{-\kappa}$.

\begin{proofclaim}
Substituting $\Gamma_{-1}(0)$ in Equation \eqref{ecugamma} and simplifying, we get
$$
\Gamma_{-1}(0)=\displaystyle{\frac{2}{2\h(0)-\sqrt{4\h(0)^2+\kappa}}}.
$$
Since $\kappa<0$, the denominator in the above fraction is always well-defined and so it is the value $\Gamma_{-1}(0)$. For proving that $\Gamma_{-1}(0)>2/\sqrt{-\kappa}$, after a similar computation as in Claim 1. we get
$$
0>4\h(0)^2+\kappa-2\h(0)\sqrt{4\h(0)^2+\kappa}=\sqrt{4\h(0)^2+\kappa}\left(\sqrt{4\h(0)^2+\kappa}-2\h(0)\right).
$$
This time, since $\kappa<0$ we have that the latter expression is negative, concluding the proof.
\end{proofclaim}

By the previous claims, in the phase plane $\tm1$ of both $\nil$ and $\sl$, the curve $\Gamma_{-1}$ is disconnected, has as endpoints $(0,\pm1)$ and does not intersect the axis $y=0$. In $\nil$, $\Gamma_{-1}$ converges to $y=0$ and in $\sl$, $\Gamma_{-1}$ \emph{leaves} $\tm1$ before intersecting the axis $y=0$. In particular, no equilibria exist in $\tm1$.

In both spaces, the curve $\Gamma_\varepsilon$ and the axis $y=0$ divide $\Theta_\varepsilon$ into four connected components, which we will call \emph{monotonicity regions}, where the coordinates $(x(s),y(s))$ of any orbit are monotonous functions; see Figure \ref{planosfases}. Hence the behavior of an orbit is uniquely determined by the monotonicity region where it belongs. We describe this behavior next.

\begin{figure}[H]
\centering
\includegraphics[width=.95\textwidth]{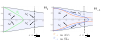} 
\caption{The phase planes $\te,\ \varepsilon=\pm1$ and their monotonicity regions. Note that in $\sl$, $\te$ is only defined for $x<2/\sqrt{-\kappa}$. The arrows indicate the motion of an orbit.}
\label{planosfases}
\end{figure}

\begin{pro}\label{direccionesmonotonia}
Let be $(x_0,y_0)\in \Theta_\varepsilon$ and consider an orbit $\gamma(s)=(x(s),y(s))$ such that $\gamma(s_0)=(x_0,y_0)$. Then, the following properties hold:
\begin{enumerate}
\item If $y_0=0$, then $\gamma$ is orthogonal to the axis $y=0$. If $y_0\neq 0$, then we can see $\gamma(s)$ locally around $\gamma(s_0)$ as a graph $y(x)$. Then:
\item If $x_0>\Gamma_\varepsilon(y_0)$ (resp. $x_0<\Gamma_\varepsilon(y_0)$) and $y_0>0$, then $y(x)$ is strictly decreasing (resp. increasing) at $x_0$. 
\item If $x_0>\Gamma_\varepsilon(y_0)$ (resp. $x_0<\Gamma_\varepsilon(y_0)$) and $y_0<0$, then $y(x)$ is strictly increasing (resp. decreasing) at $x_0$.
\item If $x_0=\Gamma_\varepsilon(y_0)$, then $y'(x_0)=0$ and $y(x)$ has a local extremum at $x_0$.
\end{enumerate}
\end{pro}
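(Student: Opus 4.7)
The plan is to read all four statements directly off the ODE system \eqref{1ordersys} at the base point $\gamma(s_0) = (x_0, y_0)$. From the first equation, $x'(s_0) = y_0$, so if $y_0 = 0$ the velocity vector is $(0, y'(s_0))$, which is vertical and therefore orthogonal to the axis $\{y = 0\}$. This gives item 1 immediately. For $y_0 \neq 0$, since $x'(s_0) = y_0 \neq 0$, the inverse function theorem lets me express $\gamma$ locally around $\gamma(s_0)$ as a graph $y = y(x)$, with slope
$$
\frac{dy}{dx}\bigg|_{x_0} = \frac{y'(s_0)}{x'(s_0)} = \frac{y'(s_0)}{y_0}.
$$

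The heart of the argument is pinning down the sign of $y'(s_0)$. Let $N(x,y)$ denote the numerator in the second entry of \eqref{1ordersys}; the denominator $x(4+\kappa x^2)(1+\tau^2 x^2)$ is strictly positive throughout $\te$, so $\mathrm{sgn}(y'(s_0)) = \mathrm{sgn}(N(x_0,y_0))$. By the very way \eqref{ecugamma} was obtained — isolating $x$ from the equation $N(x,y) = 0$ — we have $N(x, y_0) = 0$ precisely when $x = \Gamma_\varepsilon(y_0)$. Hence on each connected component of the slice $\te \cap \{y = y_0\}$ cut out by $\Gamma_\varepsilon(y_0)$, the sign of $N$ is constant. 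To identify it, I would use the boundary value $\lim_{x \to 0^+} N(x, y_0) = 4(1 - y_0^2) > 0$ valid for $|y_0| < 1$, which forces $N > 0$ for $x_0 < \Gamma_\varepsilon(y_0)$ and, by the sign change across $\Gamma_\varepsilon(y_0)$, $N < 0$ for $x_0 > \Gamma_\varepsilon(y_0)$.

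Items 2 and 3 then follow by enumerating the four cases of $\frac{dy}{dx} = N/(\text{positive denominator})/y_0$ according to the signs of $N$ and $y_0$. For item 4, the condition $x_0 = \Gamma_\varepsilon(y_0)$ gives $N(x_0,y_0) = 0$, hence $y'(s_0) = 0$ and so $\frac{dy}{dx}(x_0) = 0$; the sign flip of $N$ across $\Gamma_\varepsilon$ established above then forces $x_0$ to be a genuine local extremum of $y(x)$ rather than an inflection point.

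The main technical hurdle is justifying that $N(\cdot, y_0)$ vanishes at exactly one value of $x$ inside $\te \cap \{y = y_0\}$, so that the sign dichotomy really holds. The formula \eqref{ecugamma} was derived by isolating $x$ from $N = 0$ and squaring, which in principle can introduce spurious roots; I must confirm that no such extraneous zero enters $\te$ and that the single zero $\Gamma_\varepsilon(y_0)$, when it exists, is the only sign transition on the slice. Claims 1--6 already constrain the position of $\Gamma_\varepsilon$ relative to the boundary $\{x = 2/\sqrt{-\kappa}\}$ in the $\kappa < 0$ cases and identify the endpoints of $\Gamma_\varepsilon$ in each geometry, and a parallel continuity argument in $x$ along each horizontal slice closes the dichotomy. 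In the slices where $\Gamma_\varepsilon(y_0)$ does not belong to $\te$, $N(\cdot, y_0)$ has constant positive sign, which is consistent with items 2--3 in the degenerate regime.
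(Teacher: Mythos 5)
Your overall strategy (read the sign of the numerator $N$ of $y'$ along each horizontal slice, using that the denominator is positive on $\te$ and that $dy/dx=y'(s)/x'(s)$) is viable and arguably more streamlined than the paper's argument, but it has a genuine gap at exactly the step you flag and then defer. Knowing that $N(\cdot,y_0)$ vanishes only at $x=\Gamma_\varepsilon(y_0)$ and that $N\to 4(1-y_0^2)>0$ as $x\to 0^+$ gives you $N>0$ on $(0,\Gamma_\varepsilon(y_0))$, but it does \emph{not} give you $N<0$ on the other side: a function can vanish at an isolated point without changing sign, and since the formula \eqref{ecugamma} was obtained by squaring, the unsquared expression could a priori have a zero of even order there. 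The phrase ``by the sign change across $\Gamma_\varepsilon(y_0)$'' assumes the conclusion, and the promised ``parallel continuity argument'' that would close the dichotomy is never supplied. That missing half is the actual content of items 2--4 in the nontrivial direction (the strict decrease for $x_0>\Gamma_\varepsilon(y_0)$, $y_0>0$, and the fact that the extremum in item 4 is genuine rather than an inflection).

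The gap is closable, and the paper's proof shows one way: it fixes the sign of $y'$ at concrete reference points rather than arguing along slices. On the axis $y=0$ the numerator reduces to the explicit quadratics $p_\kappa(x)=4-x(8\h(0)+\kappa x)$ in $\Theta_1$ and $q_\kappa(x)=4+x(8\h(0)-\kappa x)$ in $\tm1$, whose signs on either side of the equilibria are elementary; and approaching $\Omega^+$ one computes $y'(s_n)\to -(1-y_\infty^2)(4+\kappa x_\infty^2)<0$. These signs are then propagated to entire monotonicity regions by connectedness, since $y'$ vanishes only on $\Gamma_\varepsilon$. In your framework the same computation rescues the slice argument: when $\tau>0$ the right endpoint of the slice $\te\cap\{y=y_0\}$ lies on $\Omega$, where the square-root term vanishes and $N=-(1-y_0^2)(4+\kappa x_{\max}^2)<0$; when $\tau=0$ one must instead examine $N$ as $x\to\infty$ (or as $x\to 2/\sqrt{-\kappa}$ for $\kappa<0$). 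Two smaller omissions: for item 1 the equilibrium points must be excluded (there $N$ vanishes on the axis and the orbit is constant, so there is no crossing), and the positivity of the factor $4+\kappa x^2$ in the denominator for $\kappa<0$ should be noted, since it relies on $x<2/\sqrt{-\kappa}$ in the phase plane.
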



\begin{proof}
First, we study how an orbit intersects the axis $y=0$. Suppose that $\varepsilon=1$ and let $\gamma(s)=(x(s),y(s))$ be an orbit in $\Theta_1$ such that  $\gamma(0)=(x_0,0)$ for $x_0>0$. Moreover, suppose that $(x_0,0)$ is not the equilibrium point. From Equation \eqref{1ordersys} we get
$$
\gamma'(0)=\left(0,\frac{4-x_0(8\h(0)+\kappa x_0)}{x_0(4+\kappa x_0^2)(1+\tau^2x_0^2)}\right).
$$
So, $\gamma'(0)$ intersects orthogonally $y=0$, and it does either upwards or downwards depending on the sign of $p_\kappa(x_0)$, where $p_\kappa(x)=4-x(8\h(0)+\kappa x)$. Note that if $\kappa< 0$, the zeroes of $p_\kappa(x)$ are
$$
x_\pm=\frac{2}{2\h(0)\pm\sqrt{4\h(0)^2+\kappa}},
$$
and the point $(x_+,0)\in\t1$ agrees with the equilibrium $e_0$ defined in \eqref{ecuequilibrio}. Depending on the values of $\kappa$, this parabola behaves as follows:

\begin{itemize}
\item For $\kappa=0$, $p_\kappa(x)$ reduces to the straight line $y(x)=4-8\h(0)x$, which has the point $x_+=1/(2\h(0))$ as zero.
\item For $\kappa<0$, $x_-,x_+>0$, $x_+<x_-$, and $p_\kappa(x)>0$ for $x\in(0,x_+)$. Moreover, $x_+<2/\sqrt{-\kappa}<x_-$.
\end{itemize}


Now, we go back to the study of $\gamma'(0)$. First, suppose that $(x_0,0)$ lies at the left hand side of $e_0$, i.e. $x_0<x_+$. Hence, $p_\kappa(x_0)>0$ and thus $\gamma'(0)$ intersects orthogonally the axis $y=0$ pointing upwards. Analogously, $\gamma'(0)$ points downward whenever $(x_0,0)$ lies at the right-hand side of $e_0$, i.e. when $x_0>x_+$.

In $\Theta_{-1}$ the situation is similar. Again, let $\gamma(s)$ be an orbit in $\Theta_{-1}$ such that $\gamma(0)=(x_0,0)$ for $x_0>0$. This time, the value $\gamma'(0)$ is given by
$$
\gamma'(0)=\left(0,\frac{4+x_0(8\h(0)-\kappa x_0)}{x_0(4+\kappa x_0^2)(1+\tau^2x_0^2)}\right),
$$
hence its behavior is determined by the parabola $q_\kappa(x)=4+x(8\h(0)-\kappa x)$. Now, if $\kappa< 0$, $q_\kappa(x)$ has as zeros $z_\pm=-x_\pm$, where $x_\pm$ are the zeroes of $p_\kappa(x)$. Therefore, $q_\kappa(x)$ behaves as follows:
\begin{itemize}
\item For $\kappa=0$, $q_\kappa(x)$ reduces to the straight line $y(x)=4+8\h(0)x$, which has the point $x=-1/(2\h(0))$ as zero. In particular, $q_\kappa(x)>0,\ \forall x>0$.
\item For $\kappa<0$, $z_-,z_+<0$, and $q_\kappa(x)>0$ for every $x>0$.
\end{itemize}

In conclusion, $\gamma'(0)$ points upwards at every $(x_0,0)\in\tm1$.

We have the following consequence in $\t1$: let $(x_0,0)$ be a point at the left-hand side of $e_0$, and choose $r>0$ such that $D((x_0,0),r)\cap\Gamma_1=\varnothing$. Let $\gamma(s)$ be an orbit in $D((x_0,0),r)$ such that $\gamma(0)=(x_0,0)$. Then, the sign of $y'(s)$ is constant and equal to the sign of $y'(0)$, which is positive. By connectedness, the sign of $y'(s)$ of any orbit in $\Lambda_3^+\cup\Lambda_4^+$ is also positive. The same holds for an orbit contained in $\Lambda_1^+\cup\Lambda_2^+$, but this time $y'(s)$ is negative.  A similar situation holds in $\tm1$ in the monotonicity regions $\Lambda_3^-$ and $\Lambda_4^-$ if $\tau>0$; we have $y'(s)>0$ for any orbit contained in those regions. 

Finally, we prove that $y'(s)<0$ for an orbit contained in $\Lambda_1^-$ or $\Lambda_2^-$ in $\tm1$ if $\tau>0$. By symmetry of the phase plane w.r.t. the axis $y=0$ it only suffices to prove it in the region $\Lambda_1^-$. Fix a point $(x_\infty,y_\infty)\in\Omega^+$ and let $\gamma(s_n)=(x(s_n),y(s_n))=(x_n,y_n)$ be a sequence of points in an orbit $\gamma(s)$ lying in $\Lambda_1^-$ and converging to $(x_\infty,y_\infty)$. Note that $(x_\infty,y_\infty)$ are both positive. From Equation \eqref{1ordersys} we see that the values $\gamma'(s_n)$ satisfy:
$$
\left(\begin{array}{c}
x'(s_n)\\
y'(s_n)
	\end{array}\right)=\left(\begin{array}{c}
y_n\\
\displaystyle{\frac{4-\kappa x_n^2-y_n^2(4-x_n^2(\kappa-8\tau^2))+8 x_n\h(y_n)\sqrt{1-(1+\tau^2 x_n^2)y_n^2}}{x_n \left(4+\kappa x_n^2\right) \left(1+\tau^2 x_n^2\right)}}
\end{array}\right).
$$
Since $(x_\infty,y_\infty)\in\Omega^+$ we get $y_\infty^2(1+\tau^2x_\infty^2)=1$. Taking limits we see that the value $y'(s_n)$ converge to $-(1-y_\infty^2)(4+\kappa x_\infty^2)$, which is negative. Because $y'(s)=0$ only happens at $\Gamma_{-1}$, by continuity we get $y'(s)<0$. Again, a connectedness argument ensures us that this condition is fulfilled for every orbit lying in $\Lambda_1^-$, and by symmetry also in $\Lambda_2^-$.


This completes the proof of Proposition \ref{direccionesmonotonia}.
\end{proof}


\subsection{The structure of the phase plane.}\label{structurephaseplane} In the previous section we focused in how the orbits behave in each monotonicity region, hence how they move through $\te$. In this section we exhibit further properties of the phase plane that determine the global and local behavior of an orbit $\gamma(s)$ as it approaches to some point, or tends to \emph{escape} from $\te$. These properties are strongly influenced by the underlying geometric problem.

We point out that since $e_0$ is a solution of Equation \eqref{1ordersys}, because $\h\in C^1$ and by uniqueness of the Cauchy problem, an orbit could converge to $e_0$ with the parameter $s\rightarrow\infty$. In fact, in \cite{BGM1} we constructed explicit examples converging \emph{directly} to $e_0$, that is without spiraling around it. However, this situation cannot happen if $\h$ is even, as detailed next.
\begin{pro}\label{orbitanoequilibrio}
An orbit $\gamma$ in $\Theta_1$ cannot converge to $e_0$ or $e_{-1}$ (for $\kappa>0$).
\end{pro}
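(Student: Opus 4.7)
The plan is to combine two facts: the linearization of system \eqref{1ordersys} at each equilibrium has purely imaginary eigenvalues (hence orbits wind around the equilibrium with nonvanishing angular velocity in a small neighborhood), and the reversibility of the system induced by $\h$ being even forces any orbit that crosses the axis $\{y=0\}$ at two distinct times to be periodic. An orbit converging to an equilibrium must cross $\{y=0\}$ infinitely often by the first fact, and is therefore periodic by the second — contradicting convergence.

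First, I would compute the linearization of \eqref{1ordersys} at $e_0 = (x_0,0)$, where $x_0$ is given by \eqref{ecuequilibrio}. Denoting the right-hand side of the second equation by $F(x,y)$, the Jacobian has the form
\begin{equation*}
J(e_0) = \begin{pmatrix} 0 & 1 \\ \partial_x F(x_0,0) & \partial_y F(x_0,0) \end{pmatrix}.
\end{equation*}
The crucial observation is that $\partial_y F(x_0,0) = 0$: the only $y$-odd contribution to $F$ at $y=0$ comes from the factor $\h(y)$, but $\h\in\c$ is even so $\h'(0)=0$; the remaining $y$-dependence enters as $y^2$ times smooth terms and via $\sqrt{1-(1+\tau^2 x^2)y^2}$, both even in $y$. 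A short simplification using the identity $\kappa x_0 = 2\sqrt{4\h(0)^2+\kappa}-4\h(0)$ shows that
\begin{equation*}
\partial_x F(x_0,0) = -\frac{4\sqrt{4\h(0)^2+\kappa}}{x_0(4+\kappa x_0^2)(1+\tau^2 x_0^2)} < 0,
\end{equation*}
so $J(e_0)$ has trace $0$ and positive determinant, giving pure imaginary eigenvalues $\pm i\omega$ with $\omega>0$. Passing to polar coordinates $(r,\theta)$ centered at $e_0$ yields $\theta'(s) = -(\omega^2\cos^2\theta + \sin^2\theta) + o(1)$ as $r\to 0$, hence there exists a neighborhood $U$ of $e_0$ in which $\theta'(s)\leq -c < 0$ for any orbit in $U$. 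The same computation works verbatim at $e_{-1}$ when $\kappa>0$, replacing $x_0$ by the expression in \eqref{equilibrios2r}.

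Now suppose, for contradiction, that some orbit $\gamma$ satisfies $\gamma(s)\to e_0$ as $s\to\infty$. Since $\gamma(s)\in U$ for $s$ large and the angular coordinate is strictly decreasing at rate bounded away from zero, $\gamma$ must cross $\{y=0\}$ at an increasing sequence of times $s_1<s_2<\cdots\to\infty$. By Proposition \ref{direccionesmonotonia}, $\gamma$ crosses orthogonally at each $s_n$, i.e., $x'(s_n)=0$. Because $\h$ is even and every other term of $F$ is even in $y$, the involution $R(x,y)=(x,-y)$ together with time reversal $s\mapsto -s$ is a symmetry of \eqref{1ordersys}; consequently $\tilde\gamma(s):=R\gamma(2s_1-s)$ is again an orbit, and at $s=s_1$ it has the same position and the same velocity as $\gamma$ (the latter because the $x$-component vanishes there). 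Uniqueness of the Cauchy problem then forces $\gamma(s_1+t)=R\gamma(s_1-t)$, and similarly $\gamma(s_2+t)=R\gamma(s_2-t)$. Composing these two reflections at $t=s_2-s_1$ gives $\gamma(2s_2-s_1)=\gamma(s_1)$ with matching tangent, so $\gamma$ is periodic of period $2(s_2-s_1)$, contradicting $\gamma(s)\to e_0$. The argument for $e_{-1}$ in $\skr$ and $\sb$ is identical. The main technical step is verifying that $\partial_y F(x_0,0)=0$ and that the determinant of $J(e_0)$ is positive; once this is in place, the reversibility/winding dichotomy closes the proof.
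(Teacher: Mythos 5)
Your proposal is correct and follows essentially the same route as the paper: the evenness of $\h$ gives $\h'(0)=0$, so the linearization at the equilibrium is trace-free with negative lower-left entry (a center), and the $y\mapsto -y$ symmetry of the phase plane then excludes spiraling into $e_0$ (or $e_{-1}$). Your write-up is in fact somewhat more explicit than the paper's, which appeals to the classical closed-orbit-versus-spiral dichotomy where you derive the winding estimate and the reversibility-implies-periodicity step directly.
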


\begin{proof}
Let us analyze the structure of the orbits around $e_0$. The fact that $\h$ is even implies that $\h'(0)=0$, and the linearized system of Equation \eqref{1ordersys} at $e_0$ is
$$
\left(\begin{array}{c}
u\\
v
\end{array}\right)'=
\left(\begin{matrix}
0&1\\
F(\kappa,\tau,\h(0))&0
\end{matrix}\right)
\left(\begin{array}{c}
u\\
v
\end{array}\right),
$$
where $F(\kappa,\tau,\h(0))$ is a negative expression only depending on the mentioned variables; the fact that $\h\in\c$ is key here and in how the element $a_{22}$ in the linearized matrix vanishes. 

Hence, the orbits of the linearized system around the origin are ellipses, and by classical theory of non-linear autonomous systems we have two possible configurations around $e_0$: either the curves are closed, or they spiral around $e_0$, converging to it. However, the latter possibility cannot occur since the phase plane $\Theta_1$ is symmetric with respect to $y=0$, and $e_0$ belongs to this axis. In particular, all the orbits in $\Theta_1$ stays at a positive distance from $e_0$.

This proof carries over verbatim for the equilibrium point $e_{-1}$ of the phase plane $\tm1$ when $\kappa>0$.
\end{proof}

Next, we analyze the boundary points of $\te$ that cannot be limit points of orbits.

\begin{pro}\label{noorbitaborde}
An orbit $\gamma$ cannot converge to a point $(0,y)\in\overline{\Theta_\varepsilon},\ |y|<1$.
\end{pro}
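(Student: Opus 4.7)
The plan is to argue by contradiction, exploiting the fact that the $y$-component of the vector field in \eqref{1ordersys} blows up along the boundary segment $\{0\}\times(-1,1)$ of $\overline{\Theta_\varepsilon}$. Assume there is an orbit $\gamma(s)=(x(s),y(s))$ converging to $(0,y_0)$ with $|y_0|<1$ as $s$ tends to an endpoint $s^*$ of its maximal interval. The crucial observation is that the numerator of $y'$ in \eqref{1ordersys} evaluates at $(0,y_0)$ to $4(1-y_0^2)>0$, while the denominator $x(4+\kappa x^2)(1+\tau^2 x^2)$ behaves like $4x$ as $x\to 0^+$. Therefore, shrinking a neighborhood $U$ of $(0,y_0)$ suitably, there is a constant $c>0$ with
$$
y'(s)\geq\frac{c}{x(s)}\quad\text{for every}\quad\gamma(s)\in U\cap\Theta_\varepsilon.
$$

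Next I would parametrize the tail of the orbit as a graph over $x$. Since $y'>0$ on this tail, $y(s)$ is strictly monotone there; combined with $y(s)\to y_0$ this pins down the sign of $y$ near $s^*$. If $y_0\neq 0$, the $(s,y)\mapsto(-s,-y)$ symmetry of the phase plane (which holds because $\h$ is even) lets us assume $y_0>0$, so $y\geq y_0/2$ near $s^*$. If $y_0=0$, monotonicity together with $y\to 0$ still forces $y$ to have constant sign on a one-sided neighborhood of $s^*$, the sign depending on whether $s^*$ is the right or left endpoint. In either case $x'=y$ has constant sign, hence $x(s)$ is strictly monotone and the orbit can be written as $y=Y(x)$ for $x\in(0,x_0]$ with $Y(x)\to y_0$ as $x\to 0^+$.

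The contradiction then follows from the chain-rule identity
$$
\frac{d(Y^2)}{dx}=2Y\cdot\frac{dY}{dx}=2Y\cdot\frac{y'}{x'}=2Y\cdot\frac{y'}{y}=2y'\geq\frac{2c}{x},
$$
which upon integration from $\delta\in(0,x_0)$ to $x_0$ gives $Y(x_0)^2-Y(\delta)^2\geq 2c\log(x_0/\delta)$. Letting $\delta\to 0^+$, the left-hand side tends to the finite value $Y(x_0)^2-y_0^2$, whereas the right-hand side diverges to $+\infty$, the desired contradiction.

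The only point requiring genuine care is the borderline case $y_0=0$: since $|y|$ cannot be bounded below by a positive constant, the monotonicity of $x$ (needed to view the orbit as a graph over $x$) has to be extracted carefully from the sign information $y'>0$ together with the direction $s\to s^*$, rather than from a uniform lower bound on $y$. Everything else reduces to the ODE estimate sketched above.
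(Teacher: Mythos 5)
Your argument is correct, and it takes a genuinely different route from the paper. The paper's proof is geometric: it lifts the statement back to the surface, writes a piece of the rotational $\h$-surface as a graph $z=u(x,y)$ over a punctured disk with mean curvature extending continuously to the puncture, and invokes the removable-singularity theorem of Leandro--Rosenberg to conclude that the surface extends smoothly across the axis with \emph{vertical} unit normal, contradicting $|y|<1$. You instead stay entirely at the level of the phase plane: the numerator of the $y'$-component of \eqref{1ordersys} tends to $4(1-y_0^2)>0$ at $(0,y_0)$ while the denominator is comparable to $4x$, so $y'\geq c/x$ near the segment $\{0\}\times(-1,1)$; writing the tail as a graph $y=Y(x)$ (legitimate once the sign of $y=x'$ is pinned down, including in the borderline case $y_0=0$, which you treat correctly) gives $\tfrac{d(Y^2)}{dx}=2y'\geq 2c/x$, whose non-integrability at $x=0$ contradicts the boundedness of $Y^2$. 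Your version is self-contained and elementary, avoiding the external PDE result, and it equally yields the corollary the paper draws (an orbit can only reach the axis at $(0,\pm1)$, i.e.\ orthogonally); what the paper's route buys in exchange is the smooth extension of the surface across the axis, a geometric fact it reuses implicitly elsewhere. The only point worth making explicit in your write-up is that ``converge'' is taken to mean convergence of $\gamma(s)$ as $s$ tends to the endpoint of its maximal interval (so that an entire tail of the orbit lies in your neighborhood $U$), which is also the reading underlying the paper's proof.
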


\begin{proof}
By contradiction, suppose that such an orbit exists and let $\alpha(s)=(x(s),0,z(s))$ be its associated profile curve. Then, $\gamma(s_n)\rightarrow(0,y),\ y\in(-1,1)$, for a sequence $s_n$. This implies that $x(s_n)\rightarrow0$ and $x'(s_n)\rightarrow y\in (-1,1)$, that is $\alpha(s)$ approaches to the $E_3$-axis in a non-orthogonal way. 

Let $\sig$ be the $\Hs$ obtained by rotating $\alpha$. By the monotonicity properties of the phase plane, we see that a piece of $\sig$ can be written as a graph $z=u(x,y)$ over a punctured disk $D(\textbf{o},\delta)$ in $\M^2(\kappa)$. Moreover, the mean curvature $H(x,y)$ viewed as a function in $D(\textbf{o},\delta)-\{\textbf{o}\}$ extends continuously to $\textbf{o}$ with value $\h(y)$. By the work of Leandro and Rosenberg \cite{LeRo}, we know that $\sig$ extends smoothly to $D$ having vertical unit normal at $u(0,0)$ and hence angle function equal to $\pm1$. This contradicts the fact that $|y|<1$.
\end{proof}

In particular, if an $\Hs$ intersects the axis of rotation it does in an orthogonal way, i.e. $x'=\pm1$. Also, Proposition \ref{noorbitaborde} ensures us that an orbit $\gamma(s)$ can only converge to points in the boundary of $\Theta_\varepsilon$ located in $\Omega$. We show that $\gamma(s)$ cannot converge to some $(x_0,y_0)\in\Omega$ for the value of the parameter $s\rightarrow\pm\infty$. Indeed, if $(x(s),y(s))\rightarrow(x_0,y_0)\in\Omega$ for $s\rightarrow\pm\infty$, then the mean value theorem ensures us that $x'(s)\rightarrow0$. But $x'(s)=y(s)\rightarrow y_0\neq 0$, reaching to a contradiction. Hence, if $\gamma(s)$ converges to $\Omega$, reaches it at a finite instant.


Recall that the points $(0,\pm1)$ are missed by Proposition \ref{noorbitaborde}. In virtue of Proposition \ref{problemadirichlet} we can construct radial $\Hss$ intersecting orthogonally the axis of rotation over a small enough disk $D(0,\delta)$. These surfaces are defined next.
\begin{defi}\label{defisigmamasmenos}
Let be $\h\in\c$. We define the rotational $\h$-surface $\sig^+$ (resp. $\sig^-$) as the upwards oriented (resp. downwards oriented) radial solution of Proposition \ref{problemadirichlet}. Moreover, $\sig^+$ and $\sig^-$ agree after a rotation around a horizontal geodesic.
\end{defi}
The existence of $\sig^+$ and $\sig^-$ has the following implication on the phase plane $\Theta_\varepsilon$.

\begin{pro}\label{orbitasmasmenos}
There exists a unique orbit $\gamma_+$ (resp. $\gamma_-$) in $\t1$ having the point $(0,1)$ (resp. $(0,-1)$) in $\overline{\t1}$ as endpoint. Moreover, $\gamma_+$ and $\gamma_-$ are symmetric with respect to the axis $y=0$. There do not exist such orbits in $\tm1$.
\end{pro}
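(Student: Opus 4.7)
The plan is to derive existence of $\gamma_+$ from the radial graph $\sig^+$ supplied by Proposition \ref{problemadirichlet}, obtain $\gamma_-$ from the involution induced on orbits by the evenness of $\h$, and reduce both the uniqueness claim and the non-existence in $\tm1$ to the uniqueness clause of that same Proposition.

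First I would parametrize the profile of $\sig^+$ by the arc-length $s$ of the metric $d\sigma^2$ in \eqref{metricappa}, starting at the intersection with the $E_3$-axis. Orthogonality combined with the arc-length constraint forces $x(0)=0$ and $x'(0)=+1$; since $\h(1)>0$ and $\sig^+$ carries the upward unit normal at the axis, the associated graph function is strictly convex at the origin, so $z'(s)>0$ for small $s>0$. This produces an orbit $\gamma_+(s)=(x(s),x'(s))$ of \eqref{1ordersys} with $\varepsilon=+1$ and limit $(0,1)\in\overline{\t1}$ as $s\to 0^+$. For the companion orbit I would invoke the map $(x(s),y(s))\mapsto(x(-s),-y(-s))$, which sends solutions of \eqref{1ordersys} to solutions with the same $\varepsilon$ (a direct verification using $\h(-y)=\h(y)$, already noted in the phase-plane discussion after \eqref{espaciofunciones}); applied to $\gamma_+$ it delivers $\gamma_-\subset\t1$ ending at $(0,-1)$ and symmetric to $\gamma_+$ across $\{y=0\}$.

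For uniqueness, let $\tilde\gamma\subset\t1$ be any orbit with endpoint $(0,1)$. Near that endpoint its associated rotational $\h$-surface $\tilde\sig$ is a radial graph over a distance disk $D(\textbf{o},\delta)$ with $f'(0)=0$ and upward orientation, so the uniqueness clause of Proposition \ref{problemadirichlet} identifies $\tilde\sig$ with $\sig^+$ up to a vertical translation. Hence $\tilde\gamma$ and $\gamma_+$ coincide on an interval, and Cauchy--Lipschitz uniqueness at interior points of $\te$ propagates this agreement to the whole orbit. Uniqueness of $\gamma_-$ transfers through the involution above.

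Finally, to rule out orbits of this type in $\tm1$: if $\gamma\subset\tm1$ ended at $(0,1)$, the same graphicity argument together with Proposition \ref{problemadirichlet} would identify the underlying $\h$-surface with $\sig^+$ up to translation, but the profile of $\sig^+$ satisfies $z'>0$ near the axis, contradicting $\varepsilon=-1$. The endpoint $(0,-1)$ is discarded by the same reasoning (or by applying the $y\mapsto-y$ involution, which preserves $\varepsilon$). The main subtlety I foresee is the bookkeeping of the sign of $\varepsilon=\mathrm{sign}(z')$ right at the axis, where $z'$ vanishes; this is precisely where the strict convexity encoded in the hypothesis $\h\in\c$ is required, in order to determine $\varepsilon$ unambiguously as a limit from $s>0$.
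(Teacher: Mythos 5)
Your proposal is correct and follows essentially the same route as the paper: existence of $\gamma_+$ from the radial solution of Proposition \ref{problemadirichlet}, determination of $\varepsilon=+1$ near the axis from $\h(\pm 1)>0$ (your "strict convexity at the umbilic origin" is the paper's mean curvature comparison with the tangent slice), $\gamma_-$ via the symmetry $(x(s),y(s))\mapsto(x(-s),-y(-s))$, and the same sign analysis to exclude such orbits in $\tm1$. If anything, you make the uniqueness step more explicit than the paper does, by combining the uniqueness clause of Proposition \ref{problemadirichlet} with Cauchy--Lipschitz propagation; that is a welcome addition, not a deviation.
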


\begin{proof}
If prove the existence of $\gamma_+$, the existence of $\gamma_-$ follows from the symmetry of the phase plane $\Theta_1$.

In virtue of Lemma \eqref{problemadirichlet}, $\sig^+$ is described as the rotation of a curve $\alpha(s)=(x(s),0,z(s))$ around the $E_3$-axis and such that $x(0)=0$ and $x'(0)=1$. Since the mean curvature of $\sig^+$ at $p_0$ is positive and $\sig^+$ is upwards oriented, the mean curvature comparison principle yields $z(s)>0$ and $z'(s)>0$ for $s>0$ small enough. Thus, the orbit $\gamma_+(s)$ generated by $\alpha(s)$ belongs to $\Theta_1$ for $s>0$ small enough.

The mean curvature comparison principle ensures us that such an orbit cannot exist in $\Theta_{-1}$ for upwards oriented graphs. By symmetry of the phase plane we ensure that this condition also holds at the point $(0,-1)$, generating an orbit $\gamma_-$ that corresponds to the $\h$-surface $\sig^-$. Again, $\gamma_-$ cannot exist in $\Theta_{-1}$ by the mean curvature comparison principle.
\end{proof}

Thus, the orbit $\gamma_+(s)$ starts at the point $(0,1)\in\overline{\Theta_1}$, say at the instant $s=0$, and then is strictly contained in the monotonicity region $\Lambda_1^+$ for $s>0$ small enough. We ensure that $\gamma_+(s)$ cannot intersect again the boundary component $y=1/\sqrt{1+\tau^2x^2}$. In general, we prove that the orbits in $\Theta_\varepsilon$ cannot have endpoints in $\Omega$ arbitrarily.

\begin{pro}\label{comportamientobordeorbitas}
Let $\gamma(s)$ be an orbit in $\te$ and consider $\alpha(s)=(x(s),0,z(s))$ the associated arc-length parametrized curve. Suppose that $\gamma(s)$ has some $(x_0,y_0)\in\Omega^\pm$ as endpoint at $s=s_0$. Then, 

\begin{itemize}
\item[1.] If $\gamma(s_0)\in\Omega^+$, $z(s)$ has a local minimum at $s=s_0$. In this case, $\gamma(s)$ lies in $\Theta_1$ for $s>s_0$. For $s<s_0$, $\gamma(s)$ belongs to $\Theta_{-1}$.
\item[2.] If $\gamma(s_0)\in\Omega^-$, $z(s)$ has a local maximum at $s=s_0$. In this case, $\gamma(s)$ lies in $\Theta_1$ for $s<s_0$. For $s>s_0$, $\gamma(s)$ belongs to $\Theta_{-1}$.
\end{itemize}
\end{pro}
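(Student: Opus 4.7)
\begin{proofsk}
The plan is to show first that $s_0$ is a critical point of the height function $z$, then to extract the sign of $z''(s_0)$ from the mean curvature equation, and finally to translate this into the claimed assignment of sides between $\t1$ and $\tm1$. The profile curve $\alpha(s)=(x(s),0,z(s))$ is smooth as a meridian of the smooth $\h$-surface $\sig$, so it admits a smooth extension across $s_0$ irrespective of the fact that the phase plane changes at $\Omega$.

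Since $\gamma(s_0)\in\Omega^\pm$ we have $y_0^2(1+\tau^2 x_0^2)=1$, and because $y=x'$ in arc length, the arc-length identity $(1+\tau^2 x^2)x'^2+\tfrac{(4+\kappa x^2)^2}{16}z'^2=1$ forces $z'(s_0)=0$. To pin down $z''(s_0)$ I would go back to the unreduced formula \eqref{eqmedia} (the reduced form \eqref{ecumedia} has denominator vanishing at $s_0$ and only yields an equation for $x''(s_0)$). Evaluating \eqref{eqmedia} at $s_0$ with $z'(s_0)=0$ and $x'(s_0)=y_0$, every summand in the numerator vanishes except the one proportional to $z''$, while in the denominator one uses $y_0^2(1+\tau^2 x_0^2)=1$ to collapse the $3/2$-power to $64$. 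After simplification one obtains
\[
z''(s_0)\;=\;\frac{32\,\h(y_0)}{(4+\kappa x_0^2)^2(1+\tau^2 x_0^2)\,y_0}.
\]
Since $\h\in\c$ gives $\h(y_0)>0$ and $4+\kappa x_0^2>0$ throughout the phase plane (using $x_0<2/\sqrt{-\kappa}$ when $\kappa<0$), the sign of $z''(s_0)$ coincides with the sign of $y_0$.

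If $\gamma(s_0)\in\Omega^+$, then $y_0>0$ gives $z''(s_0)>0$, so $z$ has a strict local minimum at $s_0$; consequently $z'(s)<0$ for $s<s_0$ and $z'(s)>0$ for $s>s_0$, meaning $\varepsilon=-1$ to the left and $\varepsilon=+1$ to the right, placing $\gamma$ in $\tm1$ for $s<s_0$ and in $\t1$ for $s>s_0$. The case $\gamma(s_0)\in\Omega^-$ follows by the same computation with $y_0<0$, giving $z''(s_0)<0$ and hence a strict local maximum of $z$ with the phase-plane assignment reversed. The main obstacle is the bookkeeping in the second step: one must verify that after imposing $z'(s_0)=0$ and $y_0^2(1+\tau^2 x_0^2)=1$ in \eqref{eqmedia} the unique surviving contribution in the numerator is the $z''$-term with nonzero coefficient, and that the denominator cleanly collapses to $256\,x_0$. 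Once that is checked, the remaining sign analysis is elementary.
\end{proofsk}
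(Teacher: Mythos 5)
Your proposal is correct and follows essentially the same route as the paper: derive $z'(s_0)=0$ from the arc-length identity, evaluate the unreduced mean curvature formula \eqref{eqmedia} at $s_0$ to isolate $z''(s_0)$ with sign equal to that of $y_0$, and conclude the local extremum and the $\t1$/$\tm1$ assignment. Your explicit formula for $z''(s_0)$ agrees with the paper's identity $2\h(x'(s_0))=\tfrac{(4+\kappa x_0^2)^2\,16\,x'(s_0)(1+\tau^2x_0^2)}{4(16 x'(s_0)^2(1+\tau^2x_0^2))^{3/2}}z''(s_0)$ after using $y_0^2(1+\tau^2x_0^2)=1$, and your observation that one must avoid the reduced form \eqref{ecumedia} (whose denominator vanishes on $\Omega$) is exactly the point the paper handles by working with \eqref{eqmedia}.
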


\begin{proof}
Suppose that $\gamma_+(s_0)=(x_0,y_0)\in\Omega^+$ for some $s_0$ and let $\alpha(s)=(x(s),0,z(s))$ be the arc-length parametrized curve defined by $\gamma(s)$. Note that $x'(s_0)=y_0>0$. Because $\gamma(s_0)\in\Omega^+$ we have that the angle function $x'(s)=y(s)$ satisfies $y(s_0)=1/\sqrt{1+\tau^2 x(s_0)^2}$, and thus the arc-length condition
$$
(1+\tau^2 x(s)^2)x'(s)^2+\frac{(4+\kappa x(s)^2)^2}{16}z'(s)^2=1
$$
ensures us that $z'(s_0)=0$. From Equation \eqref{eqmedia} and the fact that $z'(s_0)=0$, we get
$$
2\h(x'(s_0))=\frac{\left(4+\kappa x(s_0)^2\right)^2 16 x'(s_0)\left(1+\tau^2 x(s_0)^2\right)}{4\left(16 x'(s_0)^2 \left(1+\tau^2 x(s_0)^2\right)\right)^{3/2}}z''(s_0).
$$
Since $\h$ and $x'(s_0)=y(s_0)$ are positive, we see that $z''(s_0)$ is positive as well which yields that $z(s_0)$ is a local minimum of $z(s)$. Thus, $z(s)$ is increasing for $s>s_0$ and decreasing for $s<s_0$. This behavior implies that the orbit describing $\alpha(s)$ lies in $\Theta_1$ for $s>s_0$ and in $\Theta_{-1}$ for $s<s_0$. Hence, this orbit in $\Theta_{-1}$ \emph{ends} at $(x_0,y_0)\in\Omega^+$ and then \emph{starts} at the same point but this time in $\Theta_1$.

If $\gamma(s_0)\in\Omega^-$, the proof is similar; just note that $x'(s_0)=y_0<0$, hence $z''(s_0)$ is negative. This time, the orbit in $\Theta_1$ \emph{ends} at $\Omega^-$ and then \emph{starts} at the same point but this time in $\Theta_{-1}$.
\end{proof}
%

In any of these situations, i.e. where $z'(s)=0$ and it changes its monotony, we will say that the orbit in $\Theta_\varepsilon$ \emph{continues} in $\Theta_{-\varepsilon}$. and this continuation has to be understood as the extension of the associated $\Hs$ having a common point with the same unit normal.

Finally, we focus on whether an orbit can \emph{escape} from the phase plane $\te$.
\begin{pro}\label{orbitasacotadas}
Let be $\h\in\c$ and $\gamma(s)=(x(s),y(s))$ an orbit in $\te$. Then $x(s)$ cannot diverge to $\infty$ if $\kappa=0$, or tend to $2/\sqrt{-\kappa}$ if $\kappa<0$.
\end{pro}

\begin{proof}
The proof will be done by contradiction and distinguishing the possible values of $\kappa$.

\textbf{Case $\kappa<0$.} 

Let $\alpha(s)=(x(s),0,z(s))$ be the arc-length parametrized curve generated by $\gamma(s)$. Then, the fact that $\alpha(s)$ is the profile of a rotational $\Hs$ is equivalent to the following system to be fulfilled
\begin{equation}\label{sistemaode}
\left\lbrace\begin{array}{l}
\vspace{.25cm}x'(s)=\displaystyle{\frac{\cos\theta(s)}{\sqrt{1+\tau^2 x(s)^2}}},\\
z'(s)=\displaystyle{\frac{4\sin\theta(s)}{4+\kappa x(s)^2}},\\
\theta'(s)=\displaystyle{\frac{1}{(4+\kappa x(s)^2)\sqrt{1+\tau^2x(s)^2}}\left(8\h\left(\frac{\cos\theta(s)}{\sqrt{1+\tau^2x(s)^2}}\right)-\frac{4-\kappa x(s)^2}{x(s)}\sin\theta(s)\right)}.
\end{array}\right.
\end{equation}
Here, $\theta(s)$ is the angle that $\alpha'(s)$ makes with the $E_1$-direction.

Assume that $\gamma(s)$ lies in $\t1$ with $x(s)\rightarrow 2/\sqrt{-\kappa},\ s\nearrow s_0$; the case where $s\searrow s_0$ or $\gamma(s)$ lies in $\tm1$ are proved similarly. Hence, $\gamma(s)$ is contained in $\Lambda_1^+$ and stays there as $x(s)\rightarrow 2/\sqrt{-\kappa}$ and $y(s)\rightarrow y_0$, where $y_0\in[0,1/\sqrt{1-4\tau^2/\kappa}]$. In particular, $\theta(s)\rightarrow\theta_0\in[0,\pi/2]$, where $\theta_0$ is defined by $\cos\theta_0=y_0\sqrt{1-4\tau^2/\kappa}$. Because $\h\in\c$, at the limit $x_0:=2/\sqrt{-\kappa}$ we have
$$
\frac{2}{\sqrt{-\kappa}}\h\left(\frac{\cos\theta_0}{\sqrt{1+\tau^2x_0^2}}\right)>\sqrt{1-\left(\frac{\cos\theta_0}{\sqrt{1+\tau^2x_0^2}}\right)^2}=\sqrt{\frac{\sin^2\theta_0+\tau^2x_0^2}{1+\tau^2x_0^2}}.
$$
Now, it is trivial to check that the following inequality holds
$$
\h\left(\frac{\cos\theta_0}{\sqrt{1+\tau^2x_0^2}}\right)-\frac{\sqrt{-\kappa}}{2}\sin\theta_0\geq\h\left(\frac{\cos\theta_0}{\sqrt{1+\tau^2x_0^2}}\right)-\frac{\sqrt{-\kappa}}{2}\sqrt{\frac{\sin^2\theta_0+\tau^2x_0^2}{1+\tau^2x_0^2}}:=c_0>0.
$$
Taking limit we conclude
$$
\lim_{s\rightarrow s_0} 8\h\left(\frac{\cos\theta}{\sqrt{1+\tau^2x^2}}\right)-\frac{4-\kappa x^2}{x}\sin\theta=8\left(\h\left(\frac{\cos\theta_0}{\sqrt{1+\tau^2x_0^2}}\right)-\frac{\sqrt{-\kappa}}{2}\sin\theta_0\right)\geq 8c_0>0.
$$
So, the third equation in \eqref{sistemaode} yields that for $x$ close to $2/\sqrt{-\kappa}$ we have $\theta'(s)\geq c_1/(4+\kappa x(s)^2)$, for another positive constant $c_1$. In particular, $\lim_{s\rightarrow s_0}\theta'(s)=\infty$.

Now, we see the angle $\theta$ as a function $\theta(x)$ of $x$; this can be done since $x'(s)>0$ and by the inverse function theorem. Hence,
$$
\theta'(s)=\frac{d\theta}{ds}=\frac{d\theta}{dx}\frac{dx}{ds}=\theta'(x)x'(s)=\theta'(x)\frac{\cos\theta(x)}{\sqrt{1+\tau^2 x^2}}.
$$
In this setting, the third equation in \eqref{sistemaode} for $x$ close enough to $2/\sqrt{-\kappa}$ reads as:
$$
\theta'(x)\cos\theta(x)=\frac{1}{4+\kappa x^2}\left(8\h\left(\frac{\cos\theta(x)}{\sqrt{1+\tau^2 x^2}}\right)-\frac{4-\kappa x^2}{x}\sin\theta(x)\right)\geq\frac{c_1}{4+\kappa x^2}.
$$
Integrating from some fixed, large enough $x_0$ and $x$ yields
$$
\sin\theta(x)\geq\frac{c_1}{2\sqrt{-\kappa}}\arctanh\left(\frac{\sqrt{-\kappa}}{2}x\right)+\sin\theta(x_0).
$$
This is a contradiction since the right-hand side tends to infinity as $x$ approaches to $2/\sqrt{-\kappa}$.

Following the idea developed for the case $\kappa<0$, the case $\kappa=0$ is proved similarly. Now, the $x$-coordinate of an orbit $\gamma(s)=(x(s),y(s))\in\te$ would satisfy $x(s)\rightarrow\infty$. We give a sketch of the proof.

\textbf{Case $\kappa=0$.} Suppose that $\gamma(s)$ is in the phase plane $\t1$; the case $\tm1$ is analogous. Moreover, we assume that $\gamma(s)$ is strictly contained in $\Lambda_1^+$, since the case when $\gamma(s)$ lies in $\Lambda_2^+$ is equivalent by symmetry of $\t1$. After expressing $\theta$ as a function of $x$, the third equation in \eqref{sistemaode} for $x$ large enough yields 
$$
\theta'(x)\cos\theta(x)=2\h\left(\frac{\cos\theta(x)}{\sqrt{1+\tau^2x^2}}\right)-\frac{\sin\theta(x)}{x}\geq c_0>0.
$$
Explicit integration from some $x_0$ large enough and $x>x_0$ yields $\sin\theta(x)\geq c_0x+\sin\theta(x_0)$, which is obviously a contradiction when $x\rightarrow\infty$.
\end{proof}

\section{Proof of Theorem \ref{teoremadelaunay}}\label{clasificacionrotacionales}

We will use the coordinate model $\rkt$ as introduced in Section \ref{secpropbasicas}. We suppose that the axis of rotation is the $E_3$-axis $(0,0,z),\ z\in\R$.

The proof will be done by taking advantage from the phase plane analysis made in Section \ref{rotational}. The structure of the phase planes $\Theta_\varepsilon,\ \varepsilon=\pm1$, for the different cases $\kappa,\tau$ was shown in Figure \ref{planosfases} and detailed in Proposition \ref{direccionesmonotonia}. In particular, the equilibrium point $e_0$ in $\Theta_1$ given by Equation \eqref{ecuequilibrio} generates a CMC vertical cylinder, obtaining the first example of the classification.

For the existence of the $\h$-sphere, consider the orbit $\gamma_+$ starting at the point $(0,1)$ at the instant $s=0$ given by Proposition \ref{orbitasmasmenos}. For $s>0$ small enough $\gamma_+(s)$ lies in $\Lambda_1^+$, and from Propositions \ref{comportamientobordeorbitas} and \ref{orbitasacotadas} we conclude that $\gamma_+(s)$ is contained in $\Lambda_1^+$ until it intersects the axis $y=0$ at some $\gamma(s_0)=(r_0,0),\ r_0>0$. This also holds for the orbit $\gamma_-(s)$, i.e. this time $\gamma_-(s)$ ends at the point $(0,-1)$ at some instant $s_1>0$, is contained in $\Lambda_2^+$ and comes from intersecting the axis $y=0$ at a finite instant. Since $\gamma_+$ and $\gamma_-$ are symmetric, they meet orthogonally at the axis $y=0$. By uniqueness $\gamma_+$ and $\gamma_-$ can be smoothly glued to generate a compact orbit $\gamma_0:=\gamma_+\cup\gamma_-$ that joins the points $(0,1)$ and $(0,-1)$. 

\begin{figure}[H]
\centering
\includegraphics[width=.7\textwidth]{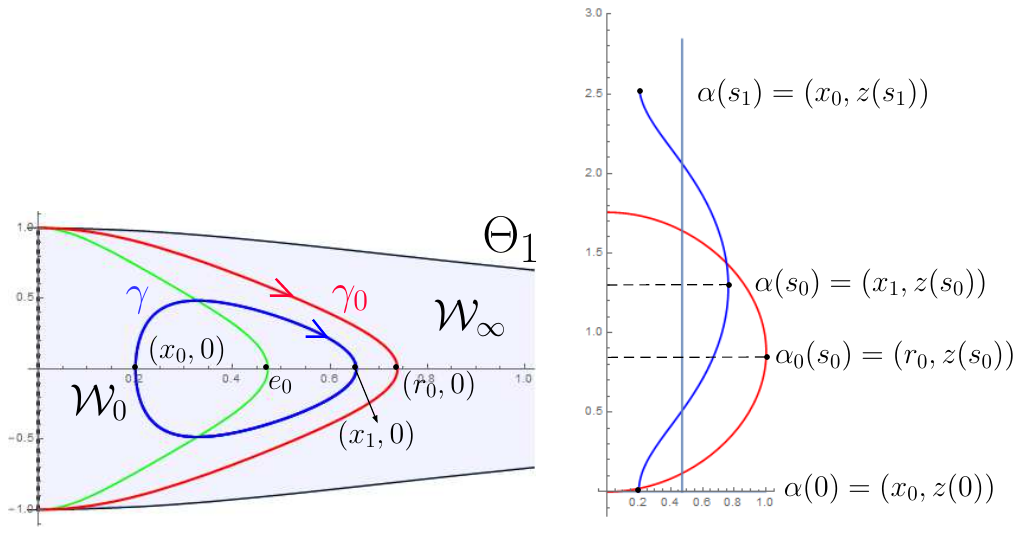}  
\caption{Left: the phase plane $\t1$ and the orbits corresponding to the $\h$-sphere, the CMC cylinder and an $\h$-unduloid. Right, the profile curves of these $\h$-surfaces.}
\label{fasesesferadelaunay}
\end{figure}

The arc-length parametrized curve $\alpha_0(s)$ associated to $\gamma_0(s)$ intersects the $E_3$-axis at the instant $s=0$, has strictly increasing height function (hence is embedded in $\rkt$), and its angle function at the instant $s=s_0$ vanishes; here, the $x(s)$-coordinate reaches a global maximum and then decreases. By the even condition on $\h$, $\alpha_0$ is symmetric with respect to the horizontal geodesic at height $z(s_0)$ in the $xz$-plane. The $\h$-surface generated by rotating $\alpha_0$ is an $\h$-sphere, denoted by $S_\h$, with strictly decreasing angle function. See Figure \ref{fasesesferadelaunay}.

Recall that the orbit $\gamma_0$ of the $\h$-sphere divides $\Theta_1$ in two connected components: one bounded containing the equilibrium $e_0$, that we will denote by $\mathcal{W}_0$, and other unbounded that we will denote by $\mathcal{W}_\infty$. 

To prove the existence of the $\h$-unduloids, consider $x_0>0$ such that $(x_0,0)\in\mathcal{W}_0$, and suppose that $(x_0,0)$ is at the left-hand side of $e_0$. Let $\gamma(s)=(x(s),y(s))$ be the orbit passing through $(x_0,0)$ at the instant $s=0$.  Then, for $s>0$ small enough $\gamma(s)$ is contained in $\Lambda_4^+$ and follows its monotonicity direction. By continuity, $\gamma(s)$ has to intersect $\Gamma_1$ at some finite point, where the $y(s)$-coordinate of $\gamma(s)$ reaches a maximum, and then $\gamma(s)$ enters to the region $\Lambda_1^+$. Since $\gamma(s)$ cannot intersect $\gamma_0$ by uniqueness of the Cauchy problem and $\gamma(s)$ cannot converge to $e_0$ with $s\rightarrow\infty$ in virtue of Proposition \ref{orbitanoequilibrio}, the only possibility for $\gamma(s)$ is to intersect the axis $y=0$ at some finite point $\gamma(s_0)=(x_1,0)$ lying at the right-hand side of $e_0$. By symmetry of $\Theta_1$, the same behavior holds in the regions $\Lambda_2^+$ and $\Lambda_3^+$ and then $\gamma(s)$ reaches again the point $(x_0,0)$ at some instant $s_1>0$, which implies that $\gamma(s)$ is a periodic orbit. Note that in any case, $\gamma(s)$ cannot converge to the segment $\{0\}\times[-1,1]$ in virtue of Proposition \ref{noorbitaborde}.


This orbit generates an arc-length parametrized curve $\alpha(s)$ whose height function is strictly increasing (since $\gamma(s)\subset\Theta_1)$ and hence $\alpha(s)$ is embedded. The $x(s)$-coordinate of $\alpha(s)$ is periodic, its maximum is $x(s_0)=x_1$ and its minimum is $x(0)=x_0$; see Figure \ref{fasesesferadelaunay}. The rotation of $\alpha(s)$ generates a properly embedded $\h$-surface $U_\h$ that is diffeomorphic to $\S^1\times\R$ and with periodic distance to the $E_3$-axis, that is, $U_\h$ is an $\h$-unduloid. 

Moreover, each $\h$-unduloid is uniquely determined by the value $x_0$ that agrees with the radius of the smallest circumference contained in $U_\h$. Thus, the family of $\h$-unduloids is a continuous family $\{U_\h(r)\}$ parametrized by the \emph{necksize} of their \emph{waists}, where $0<r<2(\sqrt{4\h(0)^2+\kappa}+2\h(0))^{-1}$. Similarly to the CMC case, when $r\rightarrow 2(\sqrt{4\h(0)^2+\kappa}+2\h(0))^{-1}$ the $\h$-unduloids $U_\h(r)$ converge to the vertical cylinder of CMC $\h(0)$, and when $r\rightarrow 0$ the $\h$-unduloids converge to a singular chain of tangent $\h$-spheres.

Lastly, we prove the existence of the $\h$-nodoids. For that, let $(r_0,0)$ be the point of intersection of $\gamma_0$ with $y=0$, and fix some $x_0>r_0$. Consider the orbit $\gamma(s)$ passing through $(x_0,0)$ at the instant $s=0$. For $s>0$ small enough, $\gamma(s)$ lies in $\Lambda_2^+$, and since $\gamma_0$ and $\gamma$ cannot intersect each other, $\gamma(s)$ has some $\gamma(s_0)=(x_1,y_1)\in\Omega^-,\ x_1>0,\ y_1<0$ as endpoint. By symmetry, $\gamma(s)$ for $s<0$ has the same behavior at $\Lambda_1^+$, having the point $\gamma(-s_0)=(x_1,-y_1)\in\Omega^+$ as endpoint. This orbit generates an arc-length parametrized curve $\alpha(s)$ which is also symmetric with respect to the rotation of angle $\pi$ around the horizontal geodesic in the $xz$-plane at height $z(0)$; after a vertical translation we can suppose that $z(0)=0$. At this height, the $x(s)$-coordinate of $\alpha(s)$ reaches its maximum $x_0$, and then decreases to the value $x_1$. The height $z(s)$ reaches a minimum at $s=-s_0$ where $z'(-s_0)=0$, then increases and reaches a maximum at $s=s_0$ where again $z'(s_0)=0$. See Figure \ref{nodoidefig}.


\begin{figure}[H]
\centering
\includegraphics[width=.7\textwidth]{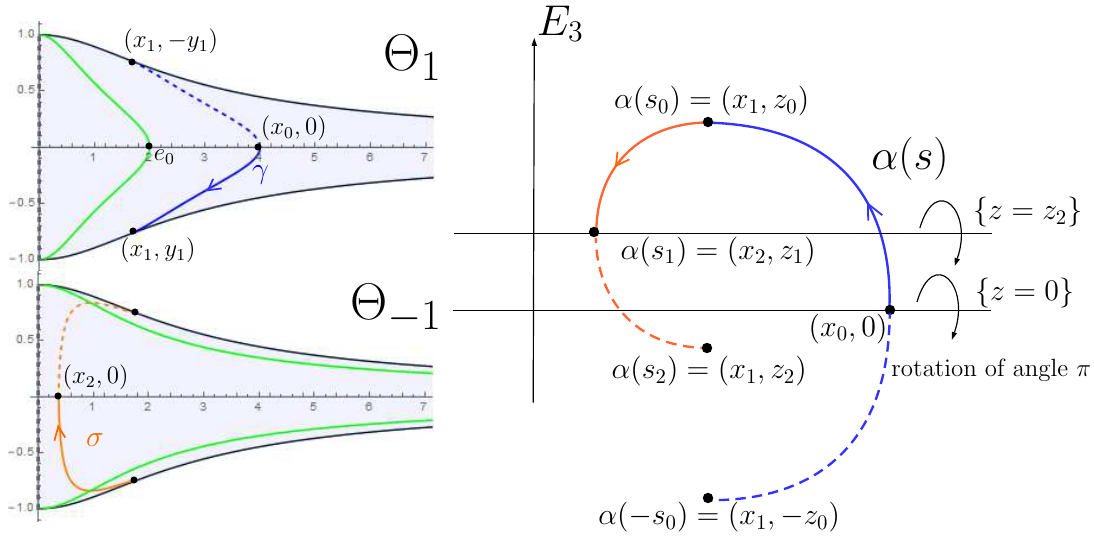}  
\caption{Left: the phase plane and the orbit generating an $\h$-nodoid. Right: the profile curve of the $\h$-nodoid.}
\label{nodoidefig}
\end{figure}

Now, for $s>s_0$ the function $z(s)$ is decreasing, hence $\alpha(s)$ for $s>s_0$ generates an orbit $\sigma(s)$ in $\Theta_{-1}$ having the point $(x_1,y_1)\in\Omega^-$ as endpoint. The monotonicity properties of $\Theta_{-1}$ and Proposition \ref{noorbitaborde} ensures us that $\sigma(s)$ has to intersect the axis $y=0$ at some finite point $\sigma(s_1)=(x_2,0),\ x_2>0$. By symmetry, $\sigma(s)$ ends up having the point $(x_1,-y_1)\in\Omega^+$ as endpoint for some $s=s_2$. Thus, the height of $\alpha(s)$ is strictly decreasing starting at the value $z_0$ and having the value $z(s_2)$ as minimum. This time, the distance $x(s)$ to the $E_3$-axis has the value $x_2$ as minimum. See again Figure \ref{nodoidefig}.
 
\section{Proof of Theorem \ref{teorematoros}}\label{discusiontoros}

\subsection{Non-existence of rotational $\h$-tori.}\label{nonexistencetori} We show the non-existence of rotational $\h$-tori for further prescribed functions $\h\in\c$ that generalize the case that $\h$ is constant.


Suppose that $\alpha(s)=(x(s),0,z(s))$ in $\rkt$ is arc-length parametrized with respect to the metric \eqref{metricappa} and that generates an $\Hs$ after being rotated around the $E_3$-axis. Then, as pointed out in Proposition \ref{orbitasacotadas}, its coordinates satisfy Equation \eqref{sistemaode}.

Now, suppose that $\alpha(s)$ generates a compact portion of an $\h$-nodoid as in showed in Figure \ref{nodoidefig}. Hence, $\alpha(s)$ is defined for $s\in(0,s_1)$, and satisfies $\theta(0)=\pi/2,\ \theta(s_0)=\pi$ and $\theta(s_1)=3\pi/2$. In particular, $\theta'(s)>0$ for every $s\in(0,s_1)$. After a vertical translation, suppose that $z(0)=0$.

From system \eqref{sistemaode}, we have that the function $z(s)$ can be written by the integral formula $z(s)=\int_{0}^s4\sin\theta(t)/(4+\kappa x(t)^2)dt$. Because $\theta'(s)>0$, we can express the functions $x(s),z(s)$ and $\nu(s)$ as functions of the angle $\theta$. The chain rule yields
$$
\frac{dz}{d\theta}=\frac{z'(s)}{\theta'(s)}=\frac{4\sin\theta\sqrt{1+\tau^2x^2}}{8\h\left(\nu\right)-\frac{4-\kappa x^2}{x}\sin\theta}.
$$
Recall that the denominator has the same sign of $\theta'(s)$, hence it is positive everywhere. Thus, we have
\begin{equation}\label{integrales}
I_1:=-\int_\pi^{3\pi/2}\frac{dz}{d\theta}d\theta=z(s_0)-z(s_1);\hspace{.5cm} I_2:=\int_{\pi/2}^\pi\frac{dz}{d\theta}d\theta=z(s_0)-z(0).
\end{equation}
and so $z(s_1)>z(0)$ if and only if $I_1<I_2$. 

Choose $s\in (0,s_0)$ and $\overline{s}\in(s_0,s_1)$ such that $\sin\theta(s)=-\sin\theta(\overline{s})$; in particular, $x(s)>x(\overline{s})$. Since $\kappa\leq0$, the map $x\mapsto f(x):=(4-\kappa x^2)/x$ is positive and strictly decreasing for $x<2/\sqrt{-\kappa}$; when $\kappa=0$ this is fulfilled for every $x>0$. As a matter of fact, $f(x(s))<f(x(\overline{s}))$. Finally, since $\nu(s)=\cos\theta(s)/\sqrt{1+\tau^2x(s)^2}$, after a straightforward computation we conclude that $\nu(s)>\nu(\overline{s})$.

Now, suppose that $\h\in\c$ is a non-increasing function in the interval $[-1,0]$. In particular, for $s,\overline{s}$ as above we have $\h(\nu(s))\leq\h(\nu(\overline{s}))$. Bearing these discussions in mind, the following inequality holds
\begin{equation}\label{desigualdadtoros}
-\frac{dz}{d\theta}(\overline{s})=\frac{-4\sin\theta(\overline{s})\sqrt{1+\tau^2 x(\overline{s})^2}}{8\h(\nu(\overline{s}))-f(x(\overline{s}))\sin\theta(\overline{s})}<\frac{4\sin\theta(s)\sqrt{1+\tau^2 x(s)^2}}{8\h(\nu(s))-f(x(s))\sin\theta(s)}=\frac{dz}{d\theta}(s),
\end{equation}
and hence
$$
z(s_0)-z(s_1)=-\int_\pi^{3\pi/2}\frac{dz}{d\theta}d\theta<\int_{\pi/2}^\pi\frac{dz}{d\theta}d\theta=z(s_0)-z(0);
$$
i.e. $z(s_1)>z(0)$ and no $\h$-tori exist.

Note that the particular choice $\h=H_0\in\R$ is compiled in this case, recovering the non-existence of rotational CMC tori in $\mathbb{H}^2(\kappa)\times\R$, $\nil$ and $\sl$.

\subsection{Existence of rotational $\h$-tori.}\label{existencetori} Finally, we exhibit the existence of rotational $\h$-tori. The fact that $\tau>0$ has strong implications on the behavior of the angle function of a rotational $\Hs$. This, along with the arbitrariness of the prescribed function $\h$, was the key that suggested us that the existence of rotational $\h$-tori could be possible in these spaces.

Following the same notation as in Section \ref{nonexistencetori}, consider the compact piece of the $\h$-nodoid generated by $\alpha(s)=(x(s),0,z(s))$ for $s\in(0,s_1)$ and such that $\theta(0)=\pi/2,\ \theta(s_0)=\pi$ where $x(s_0):=x_1$ and $z'(s_0)=0$, and $\theta(s_1)=3\pi/2$.


Again, since $\theta'(s)>0$ we express $\nu(s)$, $z(s)$ and $x(s)$ as functions of the angle $\theta$. The motion of the orbits in the phase plane $\tm1$ implies that $\nu(\theta)$ behaves as follows: $\nu(\theta)$ is strictly decreasing for $\theta\in(0,\pi/2)$, and when reaches the value $\theta=\pi$ it keeps decreasing until reaching a global minimum at some $\hat{\theta}\in(\pi,3\pi/2)$. Then, $\nu(\theta)$ is strictly increasing until reaching the value $\theta=3\pi/2$, where it vanishes again.

Given $\h\in\c$, we name $I_1(\h),I_2(\h)$ to the integrals of Equation \eqref{integrales} for the prescribed function $\h$. Let $H_0$ be a positive constant. Since Equation \eqref{desigualdadtoros} holds for $\h=H_0$, we conclude that $I_2(H_0)>I_1(H_0)$. Let us fix some $\delta>0$ small enough (we will determine later a bound for $\delta$), $x_1>0$ and define $\nu_0:=-1/\sqrt{1+\tau^2 x_1^2}$. For each $\lambda>H_0$, consider the following function:
$$
\h_\lambda(y)=\left\lbrace\begin{array}{lll}
H_0 & \mathrm{if} & y\in[-1,\nu_0-\delta]\\
\lambda & \mathrm{if} & y\in[\nu_0,-\nu_0]\\
H_0 & \mathrm{if} & y\in[-\nu_0+\delta,1],
\end{array}\right.
$$
and extend it to be $C^1$ in the whole interval $[-1,1]$.

If we prove that $I_2(\h_{\lambda_*})<I_1(\h_{\lambda_*})$ for some $\lambda_*>H_0$, by continuity we would have $I_2(\h_{\lambda_0})=I_1(\h_{\lambda_0})$ for some $\lambda_0\in(H_0,\lambda_*)$, and hence we conclude the existence of an $\h_{\lambda_0}$-torus. We prove this next.

We will denote with a sub-index $(\cdot)_\lambda$ to the functions corresponding to the prescribed function $\h_\lambda$, for each $\lambda$. Since $\sin\theta>0$ for $\theta\in(\pi/2,\pi)$, $x_\lambda(\theta)>x_0$, and $\h_\lambda(\nu_\lambda(\theta))=\lambda$, the following holds
$$
I_2(\h_\lambda)=\int_{\pi/2}^\pi\frac{4\sin\theta\sqrt{1+\tau^2x_\lambda^2}}{8\h_\lambda(\nu_\lambda)-f(x_\lambda)\sin\theta}d\theta>\int_{\pi/2}^\pi\frac{4\sin\theta\sqrt{1+\tau^2x_0^2}}{8\h_\lambda(\nu_\lambda)}d\theta=\sqrt{1+\tau^2x_0^2}\int_{\pi/2}^\pi\frac{4\sin\theta}{8\lambda}d\theta.
$$
Hence, $I_2(\h_\lambda)\rightarrow 0$ as $\lambda\rightarrow\infty$.

On the other hand, let be $\theta_1<\theta_2\in(\pi,3\pi/2)$ such that $\nu_{H_0}(\theta_i)=\nu_0-\delta$; we choose $\delta$ small enough so that the minimum of $\nu_{H_0}$ is smaller than $\nu_0-\delta$, hence $\theta_i$ are well defined. So, we can split $I_1(\h_\lambda)$ as
$$
-\int_\pi^{\theta_1}\frac{4\sin\theta\sqrt{1+\tau^2x_\lambda^2}}{8\h_\lambda(\nu_\lambda)-f(x_\lambda)\sin\theta}d\theta-\int_{\theta_1}^{\theta_2}\frac{4\sin\theta\sqrt{1+\tau^2x_\lambda^2}}{8\h_\lambda(\nu_\lambda)-f(x_\lambda)\sin\theta}d\theta-\int_{\theta_2}^{3\pi/2}\frac{4\sin\theta\sqrt{1+\tau^2x_\lambda^2}}{8\h_\lambda(\nu_\lambda)-f(x_\lambda)\sin\theta}d\theta.
$$
Note that for $\theta\in(\theta_1,\theta_2)$ in the second integral, the function $\h_\lambda(\nu_\lambda(\theta))$ is equal to $H_0$. In particular, all the geometric quantities appearing are the ones corresponding for the constant function $H_0$. Hence,
$$
I_1(\h_\lambda)>\int_{\theta_1}^{\theta_2}\frac{4\sin\theta\sqrt{1+\tau^2x_\lambda^2}}{8\h_\lambda(\nu_\lambda)-f(x_\lambda)\sin\theta}d\theta=\int_{\theta_1}^{\theta_2}\frac{4\sin\theta\sqrt{1+\tau^2x_{H_0}^2}}{8H_0-f(x_{H_0})\sin\theta}d\theta=c_0>0,
$$
where $c_0$ is a positive constant that does not depend on $\lambda$.

Thus, for $\lambda$ big enough we have $I_1(\h_\lambda)>I_2(\h_\lambda)$, hence there exists some $\lambda_0>H_0$ such that $I_1(\h_{\lambda_0})=I_2(\h_{\lambda_0})$, i.e. there exist a rotational $\h_{\lambda_0}$-torus. See Figure  \ref{homotopiatoro}.

\begin{figure}[H]
\centering
\includegraphics[width=.7\textwidth]{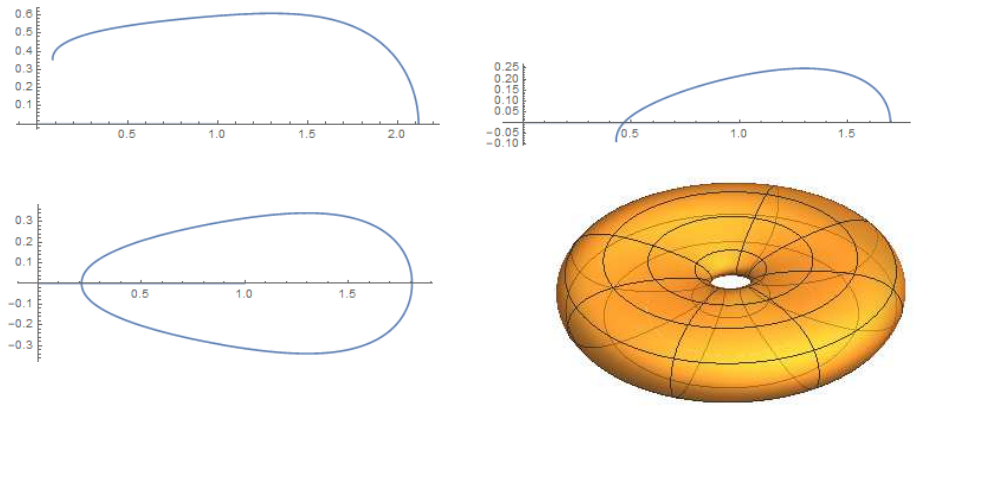}  
\caption{Top: the profile curves for different functions $\h_\lambda$, for which $I_1(\h_\lambda)-I_2(\h_\lambda)$ changes sign. Bottom: the profile curve and the rotational $\h_{\lambda_0}$-torus for a function $\h_{\lambda_0}$ such that $I_1(\h_{\lambda_0})=I_2(\h_{\lambda_0})$. Here, the space is $\nil$.}
\label{homotopiatoro}
\end{figure}

\def\refname{References}

The author was partially supported by MICINN-FEDER Grant No. MTM2016-80313-P.

\end{document}